\numberwithin{equation}{section}
\numberwithin{figure}{section}
\theoremstyle{plain}
\newtheorem{thm}{\protect\theoremname}
\theoremstyle{definition}
\theoremstyle{remark}
\newtheorem{rem}[thm]{\protect\remarkname}
\theoremstyle{plain}
\newtheorem{lem}[thm]{\protect\lemmaname}
\newenvironment{proof}[1][\protect\proofname]{\par
\normalfont\topsep6\p@\@plus6\p@\relax
\trivlist
\itemindent\parindent
\item[\hskip\labelsep
\scshape
#1]\ignorespaces
}{%
\endtrivlist\@endpefalse
}
\providecommand{\proofname}{Proof}
\theoremstyle{plain}
\providecommand{\definitionname}{Definition}
\providecommand{\lemmaname}{Lemma}
\providecommand{\propositionname}{Proposition}
\providecommand{\remarkname}{Remark}
\providecommand{\theoremname}{Theorem}
\numberwithin{equation}{section} 
\numberwithin{thm}{section}
\begin{document}

\title[Existence of $f$ self-shrinkers with a conical end]{Existence of self-shrinkers to the degree-one curvature flow with a rotationally symmetric conical end}

\author{Siao-Hao Guo}
\begin{abstract}
Given a smooth, symmetric, homogeneous of degree one function $f\left(\lambda_{1},\cdots,\,\lambda_{n}\right)$
satisfying $\partial_{i}f>0$ for all $i=1,\cdots,\,n$, and a rotationally symmetric 
cone $\mathcal{C}$ in $\mathbb{R}^{n+1}$,
we show that there is a $f$
self-shrinker (i.e. a hypersurface $\Sigma$ in $\mathbb{R}^{n+1}$
which satisfies $f\left(\kappa_{1},\cdots,\,\kappa_{n}\right)+\frac{1}{2}X\cdot N=0$, where $X$ is the position vector, $N$ is the unit normal vector, and $\kappa_{1},\cdots,\,\kappa_{n}$ are principal curvatures
of $\Sigma$) that is asymptotic to $\mathcal{C}$
at infinity.  
\end{abstract}
\maketitle

\section{Introduction}

Let $\mathcal{C}$ be a rotationally symmetric cone in $\mathbb{R}^{n+1}$, say 
\[
\mathcal{C}=\left\{ \left(\sigma s\,\nu,\, s\right)\Big|\,\nu\in\mathcal{\mathbf{S}}^{n-1},\, s\in\mathbb{R}_{+}\right\} 
\]
where $\sigma>0$ is a constant. Let $\boldsymbol{\Sigma}$ be a properly embedded hypersurface in $\mathbb{R}^{n+1}$. Then $\boldsymbol{\Sigma}$ is called a self-shrinker to the mean curvature flow (MCF: motions of hypersurfaces whose normal velocity are given by the mean curvature vector) which is $C^{k}$ asymptotic to the cone $\mathcal{C}$ at infinity provided that
\[
H\,+\,\frac{1}{2}X\cdot N=0
\]
\[
\varrho\boldsymbol{\Sigma}\,\overset{C_{\textrm{loc}}^{k}}{\longrightarrow}\,\mathcal{C}\quad\textrm{as}\;\,\varrho\searrow0
\]
where $X$ is the position vector, $N$ is the unit normal vector and $H$ is the mean curvature of $\boldsymbol{\Sigma}$. Note that the rescaled family of hypersurfaces $\left\{ \boldsymbol{\Sigma}_{t}=\sqrt{-t}\,\boldsymbol{\Sigma}\right\}_{-1\leq t<0} $
forms a MCF starting from $\boldsymbol{\Sigma}$ (when $t=-1$) and converging locally $C^{k}$ to $\mathcal{C}$ as $t\nearrow0$.

In the case when $\boldsymbol{\Sigma}$ is rotationally symmetric, one can
parametrize it by 
\[
X\left(\nu,\, s\right)=\left(\mathtt{r}\left(s\right)\,\nu,\, s\right)\quad\textrm{for}\,\;\nu\in\mathcal{\mathbf{S}}^{n-1},\, s\in\left(c_{1},\, c_{2}\right)
\]
for some constants $0\leq c_{1}<c_{2}\leq\infty$. We may orient it
by the unit-normal 
\begin{equation}
N=\frac{\left(-\nu,\,\partial_{\text{s}}\mathtt{r}\right)}{\sqrt{1+\left(\partial_{\text{s}}\mathtt{r}\right){}^{2}}}\label{154}
\end{equation}
At each point $X\in\boldsymbol{\Sigma}$, choose an orthonormal basis $\left\{ e_{1},\cdots, e_{n}\right\} $
for $T_{X}\text{\ensuremath{\boldsymbol{\Sigma}}}$ so that 
\[
e_{n}=\,\frac{\partial_{s}X}{|\partial_{s}X|}\,=\frac{\left(\partial_{\text{s}}\mathtt{r}\,\nu,\,1\right)}{\sqrt{1+\left(\partial_{\text{s}}\mathtt{r}\right){}^{2}}}
\]
then $\left\{ e_{1},\cdots, e_{n}\right\} $ forms a set of principal
vectors of $\boldsymbol{\Sigma}$ at $X$ with principal curvatures
\begin{equation}
\kappa_{1}=\cdots=\kappa_{n-1}=\frac{1}{\mathtt{r}\sqrt{1+\left(\partial_{s}\mathtt{r}\right){}^{2}}},\quad\kappa_{n}=\frac{-\partial_{s}^{2}\mathtt{r}}{\left(1+\left(\partial_{s}\mathtt{r}\right){}^{2}\right)^{\frac{3}{2}}}\label{156}
\end{equation}
As a result, $\boldsymbol{\Sigma}$ is a rotationally symmetric self-shrinker to the MCF if and only if

\begin{equation}
\left(\frac{n-1}{\mathtt{r}}-\frac{\partial_{s}^{2}\mathtt{r}}{1\,+\,|\partial_{\text{s}}\mathtt{r}|{}^{2}}\right)+\,\frac{1}{2}\left(s\,\partial_{s}\mathtt{r}-\mathtt{r}\right)=0 \label{155}
\end{equation}
Kleene and Moller showed in \cite{KM} that there
exists a unique rotationally symmetric self-shrinker
\[
\boldsymbol{\Sigma}:\; X\left(\nu,\, s\right)=\left(\mathtt{r}(s)\nu,\, s\right),\quad\nu\in\mathcal{\mathbf{S}}^{n-1},\, s\in[R,\,\infty)
\]
where the radius function $\mathtt{r}(s)$ satisfies (\ref{155}) and
\[
s\Big|\mathtt{r}(s)-\sigma s\Big|\,\leq\frac{2\left(n-1\right)}{\sigma},\quad s^{2}\Big|\partial_{s}\mathtt{r}-\sigma\Big|\,\leq\frac{2\left(n-1\right)}{\sigma}
\]
The key step is to analyze the following representation formula for (\ref{155}):

\[
\mathtt{r}(s)=\,\sigma s \,+
\]
\noindent \resizebox{1.0\linewidth}{!}{
  \begin{minipage}{\linewidth}
  \begin{align*}
\, s\int_{s}^{\infty}\frac{1}{x^{2}}\left\{ \int_{x}^{\infty}\xi\,\exp\left(-\frac{1}{2}\int_{x}^{\xi}\boldsymbol{\tau}\left(1+\left(\partial_{s}\mathtt{r}\left(\boldsymbol{\tau}\right)\right)^{2}\right)d\boldsymbol{\tau}\right)\,\left[\frac{n-1}{\mathtt{r}\left(\xi\right)}\left(1+\left(\partial_{s}\mathtt{r}\left(\xi\right)\right)^{2}\right)\right]\, d\xi\right\} dx
\end{align*}
  \end{minipage}
}
\vspace{0.2in}

On the other hand, let $f\left(\lambda_{1},\cdots,\,\lambda_{n}\right)$ be a symmetric and homogeneous of degree-one function which satisfies 
\[
\partial_{i}f>0\quad\forall\:\, i=1,\cdots, n
\]
Note that by the properties of $f$, we may assume that its domain of definition $\boldsymbol{\Omega}$ is closed under permutation and homothety, i.e.
\[
\left(\lambda_{\sigma (1)},\cdots,\,\lambda_{\sigma (n)}\right),\, \left(\varrho\lambda_{1},\cdots,\,\varrho\lambda_{n}\right) \in \boldsymbol{\Omega} \:\quad \forall \:\, \sigma \in \textbf{S}_n,\,\varrho>0
\]
whenever $\left(\lambda_{1},\cdots,\,\lambda_{n}\right) \in \boldsymbol{\Omega}$, where $\textbf{S}_n$ is the symmetric group.
Andrews in \cite{A} considered the following
evolution of hypersurfaces in $\mathbb{R}^{n+1}$:
\[
\partial_{t}X_t^{\bot}=f\left(\kappa_{1},\cdots,\,\kappa_{n}\right)N
\]
where $\kappa_{1},\cdots,\,\kappa_{n}$ are the principal curvatures
of the evolving hypersurface. In particular, if we take the curvature
function to be $f\left(\lambda_{1},\cdots,\,\lambda_{n}\right)=\lambda_{1}+\cdots+\lambda_{n}$,
then it reduces to the MCF. We call a
hypersurface $\Sigma$ in $\mathbb{R}^{n+1}$ to be a ``$f$
self-shrinker'' provided that
\[
f\left(\kappa_{1},\cdots,\,\kappa_{n}\right)\,+\,\frac{1}{2}X\cdot N=0
\]
Just like the MCF, the rescaled family of a $f$ self-shrinkers
is a self-similar solution to the ``$f$ curvature flow''. More precisely, the family of hypersurfaces $\left\{ \Sigma_{t}=\sqrt{-t}\,\Sigma\right\} _{t<0}$
defines a $f$ curvature flow. Moreover, if $\Sigma$ is $C^{k}$ asymptotic
to the cone $\mathcal{C}$ at infinity, the flow $\left\{ \Sigma_{t}\right\} _{t<0}$
converges locally $C^{k}$ to $\mathcal{C}$ as $t\nearrow0$.

This paper is an extension of the existence result in \cite{KM} to the class of $f$ self-shrinkers
with a tangent cone at infinity. In fact, the motivation of this paper comes from \cite{G}, in which we generalized the uniqueness 
result of self-shrinkers with a conical end for the MCF in \cite{W} to the $f$ curvature flow. 
Below is our main result.

\begin{thm}\label{t7}
Assume that $f$ is $C^{\mathsf{k}+1}$ in a bounded neighborhood
$\mathcal{K}$ of $\left(\overrightarrow{1},\,0\right)\equiv \left(1,\cdots,\,1,\,0\right)\in\mathbb{R}^{n}$
with $\mathsf{k}\geq3$. Then there exist $R=R\left(n,\,\mathsf{k},\mathcal{\, C},\,\mathcal{K},\,\parallel f\parallel_{C^{\mathsf{k}+1}\left(\mathcal{\mathit{\mathcal{K}}}\right)}\right)\geq1$
and $\overset{\circ}{\mathsf{u}}\in C_{0}^{\mathsf{k}}[R,\,\infty)$ so that 
\[
\Sigma\equiv\left\{ \left.  \left(\left(\sigma s\,+\,\frac{f\left(\overrightarrow{1},\,0\right)}{\sigma s}\,+\,\overset{\circ}{\mathsf{u}}\left(s\right)\right)\nu,\; s\right)\right|\,\nu\in\mathcal{\mathbf{S}}^{n-1},\, s\in[R,\,\infty)\right\} 
\]
is a rotationally symmetric $f$ self-shrinker which is $C^{\mathsf{k}}$ asymptotic
to $\mathcal{C}$ at infinity. \\
In addition, the corresponding self-similar solution to the $f$ curvature flow is given by 

\noindent \resizebox{1.01\linewidth}{!}{
  \begin{minipage}{\linewidth}
  \begin{align*}
\Sigma_{t}=\sqrt{-t}\,\Sigma=\left\{ \left. \left(\left(\sigma s\,-t\,\frac{f\left(\overrightarrow{1},\,0\right)}{\sigma s}\,+\,\overset{\circ}{\mathsf{u}}_{t}\left(s\right)\right)\nu,\; s\right)\right|\,\nu\in\mathcal{\mathbf{S}}^{n-1},\, s\in[\sqrt{-t}R,\,\infty)\right\} 
\end{align*}
  \end{minipage}
}\\
for $t\in[-1,\,0)$, where $\overset{\circ}{\mathsf{u}}_{t}\left(s\right)\equiv \sqrt{-t}\,\overset{\circ}{\mathsf{u}}\left(\frac{s}{\sqrt{-t}}\right)$ satisfies
\[
\parallel s^{3}\,\overset{\circ}{\mathsf{u}}_{t}\parallel_{L^{\infty}[\sqrt{-t}R,\,\infty)}+\parallel s^{4}\partial_{s}\overset{\circ}{\mathsf{u}}_{t}\parallel_{L^{\infty}[\sqrt{-t}R,\,\infty)}+\cdots+\parallel s^{k+2}\partial_{s}^{k-1}\overset{\circ}{\mathsf{u}}_{t}\parallel_{L^{\infty}[\sqrt{-t}R,\,\infty)}
\]
\[
\leq\, C\left(n,\,\mathsf{k},\mathcal{\, C},\,\parallel f\parallel_{C^{\mathsf{k}}\left(\mathcal{K}\right)}\right)\,\left(-t\right)^{2}
\]
\[
\parallel s^{k+1}\partial_{s}^{k}\overset{\circ}{\mathsf{u}}_{t}\parallel_{L^{\infty}[\sqrt{-t}R,\,\infty)}\,\leq\, C\left(n,\,\mathsf{k},\mathcal{\, C},\,\parallel f\parallel_{C^{\mathsf{k}}\left(\mathcal{K}\right)}\right)\,\left(-t\right)
\]
for all $t\in[-1,\,0)$.
\end{thm}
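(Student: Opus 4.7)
The plan is to adapt the integral-equation framework of Kleene--Moller to the general degree-one curvature function $f$. Using (\ref{156}) and the degree-one homogeneity of $f$, the $f$ self-shrinker equation for the radius function $\mathtt{r}(s)$ becomes, after multiplying by $\sqrt{1+(\partial_s\mathtt{r})^2}$,
\[
f\!\left(\tfrac{1}{\mathtt{r}},\ldots,\tfrac{1}{\mathtt{r}},-\tfrac{\partial_s^2\mathtt{r}}{1+(\partial_s\mathtt{r})^2}\right) + \tfrac{1}{2}\bigl(s\partial_s\mathtt{r}-\mathtt{r}\bigr) = 0.
\]
Set $\mathsf{a}=f(\vec{1},0)$ and $\mathsf{b}=\partial_n f(\vec{1},0)>0$. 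By degree-one homogeneity and the $C^{\mathsf{k}+1}$ hypothesis, $f(1/\mathtt{r},\ldots,1/\mathtt{r},\mu)=(1/\mathtt{r})f(\vec{1},\mu\mathtt{r})=\mathsf{a}/\mathtt{r}+\mathsf{b}\mu+R_f(\mathtt{r},\mu)$ with $|R_f|\lesssim\mathtt{r}\mu^{2}$. Introducing $p=\partial_s\mathtt{r}$ and $q=p-\mathtt{r}/s$ (which vanishes on any cone through the origin), the equation reduces to the first-order linear ODE
\[
\partial_s q+\Bigl(\tfrac{1}{s}-\tfrac{s(1+p^{2})}{2\mathsf{b}}\Bigr)q=\tfrac{1+p^{2}}{\mathsf{b}}\Bigl[\tfrac{\mathsf{a}}{\mathtt{r}}+R_f\Bigr].
\]
Applying the integrating factor $s\exp\bigl(-\tfrac{1}{2\mathsf{b}}\int^{s}\tau(1+p(\tau)^{2})d\tau\bigr)$ and then integrating $\partial_x(\mathtt{r}/x)=q/x$ once more produces the $f$-analog of the Kleene--Moller integral formula. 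A Laplace-type expansion of the Gaussian-like kernel identifies the leading contribution of $\mathsf{a}/\mathtt{r}(\xi)$ as $\mathsf{a}/(\sigma s)$ at infinity, motivating the ansatz $\mathtt{r}(s)=\sigma s+\mathsf{a}/(\sigma s)+\mathsf{u}(s)$.

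Substituting this ansatz into the integral formula converts the problem into a fixed-point equation $\mathsf{u}=\mathcal{T}[\mathsf{u}]$, in which $\mathcal{T}[\mathsf{u}]$ collects the subleading $O(s^{-3})$ terms of the Laplace asymptotic of the $\mathsf{a}/\mathtt{r}$ source, the double integral of $R_f$ (which is $O(s^{-5})$ since $\mu=-\partial_s^2\mathtt{r}/(1+p^{2})=O(s^{-3})$ under the ansatz), and the $\mathsf{u}$-dependent corrections appearing in $\mathtt{r}$ and $p$ inside the kernel. Next, I set up the weighted Banach space
\[
\mathcal{X}_{R}=\Bigl\{\mathsf{u}\in C^{\mathsf{k}}[R,\infty):\;\|\mathsf{u}\|_{\mathcal{X}}:=\sum_{j=0}^{\mathsf{k}-1}\|s^{j+3}\partial_s^{j}\mathsf{u}\|_{\infty}+\|s^{\mathsf{k}+1}\partial_s^{\mathsf{k}}\mathsf{u}\|_{\infty}<\infty\Bigr\}
\]
and verify that, for $R=R(n,\mathsf{k},\mathcal{C},\mathcal{K},\|f\|_{C^{\mathsf{k}+1}(\mathcal{K})})$ sufficiently large and a small radius $\rho$, $\mathcal{T}$ maps $\overline{B_{\rho}}\subset\mathcal{X}_{R}$ into itself and is a contraction there. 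The unique fixed point $\overset{\circ}{\mathsf{u}}$ is the profile in the theorem, and the time-dependent bounds on $\overset{\circ}{\mathsf{u}}_{t}$ follow at once from the scaling $\overset{\circ}{\mathsf{u}}_{t}(s)=\sqrt{-t}\,\overset{\circ}{\mathsf{u}}(s/\sqrt{-t})$.

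The principal technical difficulty lies in the contraction estimate on the full weighted $C^{\mathsf{k}}$ norm. Each differentiation in $s$ of the double integral produces both boundary terms (from the outer and inner lower limits) and interior terms (from differentiating the $p$-dependent exponential and the $\mathtt{r}$- and $\mu$-dependent source, with $R_f$ treated through the chain rule), all of which must be controlled by invoking the rapid Gaussian-like decay of the kernel to absorb polynomial weights. The sharp decay $\partial_s^{j}\mathsf{u}=O(s^{-j-3})$ for $j\leq\mathsf{k}-1$ rests on a subleading cancellation between the outer-integral contribution and the boundary contribution at each order of differentiation; at order $j=\mathsf{k}$ only one level of integration remains, which accounts for the weakened top-order bound $\|s^{\mathsf{k}+1}\partial_s^{\mathsf{k}}\mathsf{u}\|_\infty$. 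Finally, one must verify through the homogeneity rescaling $(1/\mathtt{r},\ldots,1/\mathtt{r},\mu)\mapsto(\vec{1},\mu\mathtt{r})$, together with $\mu\mathtt{r}=O(s^{-2})$, that the curvature argument stays inside $\mathcal{K}$ for $s\geq R$ and that all $\mathsf{u}$-derivatives of $R_f$ can be bounded uniformly in terms of $\|f\|_{C^{\mathsf{k}+1}(\mathcal{K})}$ and $\|\mathsf{u}\|_{\mathcal{X}}$.
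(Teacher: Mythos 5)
Your proposal is essentially correct, but it takes a genuinely different route from the paper: you generalize the Kleene--Moller scheme directly, keeping the fully nonlinear first-order reduction in $q=\partial_{s}\mathtt{r}-\mathtt{r}/s$, so that the exponential kernel of your integral formula contains $\int\boldsymbol{\tau}\left(1+p\left(\boldsymbol{\tau}\right)^{2}\right)d\boldsymbol{\tau}$ with the unknown $p$ inside, and you run the fixed point on that nonlinear integral operator. The paper instead linearizes first: it Taylor-expands the curvature function about the cone with an explicit integral remainder $\mathcal{Q}u$ (see (\ref{167})), inverts only the constant-coefficient operator $\mathcal{L}$ of (\ref{170}) via the explicit Gaussian kernel (\ref{180}) in Lemma \ref{l29}, and controls the quadratic source $\mathcal{Q}v$ by a separate rescaling argument (Lemma \ref{l30}). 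Both routes identify the same profile $\sigma s+f\left(\overrightarrow{1},\,0\right)/\left(\sigma s\right)$ and the same weighted space with the weakened top-order weight, and both reduce the theorem to a contraction on a ball. What the paper's decomposition buys is that the kernel is fixed and explicit, so the decay gain $\partial_{s}^{j}\overset{\circ}{\mathsf{u}}=O\left(s^{-j-3}\right)$ comes from differentiating the ODE, using the commutator (\ref{216}), and re-applying Lemma \ref{l29} with increasing weight $\gamma$, while the contraction only requires comparing the sources $\mathcal{Q}v-\mathcal{Q}\tilde{v}$; in your setup the same steps force you to differentiate, and to compare for two inputs, an exponential depending on the unknown, which is noticeably heavier. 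One caveat on your sketch: the improved decay at each order of differentiation is not a purely formal cancellation between outer-integral and boundary contributions of the double integral. From the representation formula one only gets $\partial_{s}^{2}\textrm{w}=\boldsymbol{\eta}-2c\int_{s}^{\infty}\xi\, e^{-c\left(\xi^{2}-s^{2}\right)}\boldsymbol{\eta}\, d\xi$ with $c=\frac{1+\sigma^{2}}{4\partial_{n}f\left(\overrightarrow{1},\,0\right)}$, and the near-cancellation of these two terms costs a derivative of the source; so your argument must in effect bootstrap through derivatives of the equation, exactly as the paper does in (\ref{217})--(\ref{234}), and that induction should be made explicit rather than attributed to a cancellation inside the unperturbed integral alone.
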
 
Note that in view of the principal curvatures of $\mathcal{C}$:  
\[
\kappa_{1}^{\mathcal{C}}=\cdots=\kappa_{n-1}^{\mathcal{C}}=\frac{1}{\sigma s\sqrt{1+\sigma{}^{2}}},\quad\kappa_{n}^{\mathcal{C}}=0
\]
(see (\ref{156})) and the homogeneity of the $f$, the condition that $f$ is $C^{\mathsf{k}+1}$ in a bounded neighborhood
$\mathcal{K}$ of $\left(\overrightarrow{1},\,0\right)=\left(1,\cdots,\,1,\,0\right)\in\mathbb{R}^{n}$ yields that $f$ is $C^{\mathsf{k}+1}$ 
in an open set containing all the the principal curvature vectors $\left(\kappa_{1}^{\mathcal{C}},\cdots,\,\kappa_{n}^{\mathcal{C}}\right)$
of $\mathcal{C}$. 

In the next section, we would use a similar representation formula as in  \cite{KM} (see (\ref{180})) to study the ODE corresponding to the $f$ self-shrinker equation (see (\ref{157})). 
We then use that, together with Banach fixed point theorem, to prove the existence of $f$ self-shrinkers.

\section*{Acknowledgement}
I am grateful to my advisor, Natasa Sesum, for her great support and many helpful suggestions to this paper.

\vspace{.5in} 
\section{Proof of the main theorem}
For a hypersurface of revolution $\Sigma$ in $\mathbb{R}^{n+1}$, we may parametrize it by 
\[
X\left(\nu,\, s\right)=\left(r\left(s\right)\,\nu,\, s\right)\quad\textrm{for}\;\nu\in\mathcal{\mathbf{S}}^{n-1},\, s\in\left(c_{1},\, c_{2}\right)
\]
for some constants $0\leq c_{1}<c_{2}\leq\infty$. By (\ref{154}) and (\ref{156}),
$\Sigma$ is a rotationally symmetric $f$ self-shrinker if and only if
\begin{equation}
f\left(\frac{1}{r\sqrt{1+\left(\partial_{s}r\right){}^{2}}}\overrightarrow{1},\;\frac{-\partial_{s}^{2}r}{\left(1+\left(\partial_{s}r\right){}^{2}\right)^{\frac{3}{2}}}\right)+\;\frac{1}{2}\,\frac{s\,\partial_{s}r-r}{\sqrt{1+\left(\partial_{s}r\right){}^{2}}}=0\label{157}
\end{equation}
where $\overrightarrow{1}=\left(1,\cdots,1\right)\in\mathbb{R}^{n-1}$. By the homogeneity of $f$, (\ref{157}) is equivalent to
\begin{equation}
\mathcal{G}\left(\partial_{s}^{2}r,\,\partial_{\text{s}}r,\, r;\, s\right)=0\label{158}
\end{equation}
where 
\begin{equation}
\mathcal{G}\left(q,\, p,\, z;\, s\right)=\, f\left(\frac{1}{z}\overrightarrow{1},\;\frac{-q}{1+p^{2}}\right)+\;\frac{1}{2}\left(sp-z\right)\label{159}
\end{equation}

On the other hand, $\Sigma$ is $C^{\mathsf{k}}$ asymptotic to $\mathcal{C}$ at infinity if and only if
\begin{equation}
\varrho\, r\left(\frac{s}{\varrho}\right)-\sigma s\,\overset{C_{\textrm{loc}}^{\mathsf{k}}}{\longrightarrow}\,0\quad\textrm{as}\;\varrho\searrow0\label{160}
\end{equation}
Let 
\begin{equation}
u\left(s\right)=r\left(s\right)-\sigma s\label{161}
\end{equation}
be the difference between the solution $r\left(s\right)$ and the radius function of the cone. Then (\ref{160}) is translated into
\begin{equation}
\varrho\, u\left(\frac{s}{\varrho}\right)\,\overset{C_{\textrm{loc}}^{\mathsf{k}}}{\longrightarrow}\,0\quad\textrm{as}\,\;\varrho\searrow0\label{162}
\end{equation}
which is equivalent to 
\[
u\left(s\right)=o\left(s\right),\;\partial_{s}u=o\left(1\right),\cdots,\;\partial_{s}^{\mathsf{k}}u=o\left(s^{1-\mathsf{k}}\right)\quad\textrm{as}\; s\nearrow\infty
\]

Now we would like to derive an equation for $u$ by plugging $r\left(s\right)=\sigma s+u\left(s\right)$
into (\ref{158}) and using Taylor's theorem to expand.
For ease of notation, let
\begin{equation}
\textbf{z}\left(u\left(s\right);\,\theta\right)=\sigma s+\theta\, u\left(s\right)\label{163}
\end{equation}
\begin{equation}
\textbf{p}\left(u\left(s\right);\,\theta\right)=\partial_{\text{s}}\left(\sigma s+\theta\, u\left(s\right)\right)=\sigma+\theta\,\partial_{s}u\label{164}
\end{equation}
\begin{equation}
\textbf{q}\left(u\left(s\right);\,\theta\right)=\partial_{s}^{2}\left(\sigma s+\theta\, u\left(s\right)\right)=\theta\,\partial_{s}^{2}u\label{165}
\end{equation}
Note that $\theta=0$ and $\theta=1$ correspond to the radius function of the cone and the solution $r\left(s\right)$, respectively. 
Then (\ref{158}) can be written as 
\[
\mathcal{G}\left(\textbf{q}\left(u\left(s\right);\,1\right),\;\textbf{p}\left(u\left(s\right);\,1\right),\;\textbf{z}\left(u\left(s\right);\,1\right);\, s\right)=0
\]
By Taylor's theorem, (\ref{159}), (\ref{163}), (\ref{164}), (\ref{165})
and the homogeneity of $f$ (noting that $f$ has
degree $1$, $\partial_{i}f$ has degree $0$ and $\partial_{ij}^{2}f$
has degree $-1$), there holds 
\begin{equation}
0=\mathcal{G}\left(\textbf{q}\left(u;\,1\right),\;\textbf{p}\left(u;\,1\right),\;\textbf{z}\left(u;\,1\right);\, s\right)\label{166}
\end{equation}
\[
=\mathcal{G}\left(\textbf{q}\left(u;\,0\right),\;\textbf{p}\left(u;\,0\right),\;\textbf{z}\left(u;\,0\right);\, s\right)+\,\partial_{\theta}\left\{ \mathcal{G}\left(\textbf{q}\left(u;\,\theta\right),\;\textbf{p}\left(u;\,\theta\right),\;\textbf{z}\left(u,\,\theta\right);\, s\right)\right\} \Big|_{\theta=0}
\]
\[
+\,\int_{0}^{1}\,\partial_{\theta}^{2}\left\{ \mathcal{G}\left(\textbf{q}\left(u;\,\theta\right),\;\textbf{p}\left(u;\,\theta\right),\;\textbf{z}\left(u;\,\theta\right);\, s\right)\right\} \,\left(1-\theta\right)d\theta
\]
\[
=f\left(\frac{1}{\sigma s}\overrightarrow{1},\,0\right)-\,\frac{\partial_{n}f\left(\frac{1}{\sigma s}\overrightarrow{1},\,0\right)}{1+\sigma^{2}}\partial_{s}^{2}u-\,\sum_{i=1}^{n-1}\frac{\partial_{i}f\left(\frac{1}{\sigma s}\overrightarrow{1},\,0\right)}{\sigma^{2}s^{2}}u+\,\frac{1}{2}\left(s\,\partial_{s}u-u\right)+\,\mathcal{Q}u
\]
\[
=\frac{f\left(\overrightarrow{1},\,0\right)}{\sigma s}\, -\,\frac{\partial_{n}f\left(\overrightarrow{1},\,0\right)}{1+\sigma^{2}}\partial_{s}^{2}u-\,\sum_{i=1}^{n-1}\frac{\partial_{i}f\left(\overrightarrow{1},\,0\right)}{\sigma^{2}s^{2}}u+\,\frac{1}{2}\left(s\,\partial_{s}u-u\right)+\,\mathcal{Q}u
\]
where 
\[
\mathcal{Q}u=\int_{0}^{1}\,\partial_{\theta}^{2}\left\{ \mathcal{G}\left(\textbf{q}\left(u;\,\theta\right),\;\textbf{p}\left(u;\,\theta\right),\;\textbf{z}\left(u;\,\theta\right);\, s\right)\right\} \,\left(1-\theta\right)d\theta
\]
Let
\begin{equation}
\omega = \omega\left(u\left(s\right);\,\theta\right)\equiv\sigma s\left(\frac{1}{\textbf{z}\left(u\left(s\right);\,\theta\right)}\overrightarrow{1},\;\frac{-\textbf{q}\left(u\left(s\right);\,\theta\right)}{1+\textbf{p}\left(u\left(s\right);\,\theta\right)^{2}}\right)\label{168}
\end{equation}
\[
=\left(\frac{1}{1+\theta\frac{u}{\sigma s}}\overrightarrow{1},\;\frac{-2\theta\sigma s\,\partial_{s}^{2}u}{1+\left(\sigma+\theta\partial_{s}u\right)^{2}}\right)
\]
By some simple calculations and the homogeneity of $f$, one get
\begin{equation}
\mathcal{Q}u(s)\,=\,\left\{\left(\int_{0}^{1}\frac{\sigma s\,\partial_{nn}^{2}f\circ\omega}{\left(1+\textbf{p}^{2}\right)^{2}}\,\left(1-\theta\right)d\theta\right)\left(\partial_{s}^{2}u\right)^{2}\right.\label{167}
\end{equation}
\[
+\left(\int_{0}^{1}\left(\sigma s\,\partial_{nn}^{2}f\circ\omega\frac{4\textbf{q}^{2}\textbf{p}^{2}}{\left(1+\textbf{p}^{2}\right)^{4}}+\,\partial_{n}f\circ\omega\frac{2\textbf{q}\left(1-3\textbf{p}^{2}\right)}{\left(1+\textbf{p}^{2}\right)^{3}}\right)\left(1-\theta\right)d\theta\right)\left(\partial_{s}u\right)^{2}
\]
\[
+\left(\int_{0}^{1}\frac{\left(\sum_{i,\, j=1}^{n-1}\sigma s\,\partial_{ij}^{2}f\circ\omega\right)+\,2\textbf{z}\sum_{i=1}^{n-1}\partial_{i}f\circ\omega}{\textbf{z}^{4}}\left(1-\theta\right)d\theta\right)u^{2}
\]
\[
+2\left(\int_{0}^{1}\left(\sigma s\,\partial_{nn}^{2}f\circ\omega\frac{-2\textbf{q}\textbf{p}}{\left(1+\textbf{p}^{2}\right)^{3}}+\,\partial_{n}f\circ\omega\frac{2\textbf{p}}{\left(1+\textbf{p}^{2}\right)^{2}}\right)\left(1-\theta\right)d\theta\right)\partial_{s}^{2}u\,\partial_{s}u
\]

\noindent \resizebox{1.0\linewidth}{!}{
  \begin{minipage}{\linewidth}
  \begin{align*}
\left.+2\left(\int_{0}^{1}\,\sum_{i=1}^{n-1}\frac{\sigma s\,\partial_{ni}^{2}f\circ\omega}{\left(1+\textbf{p}^{2}\right)\textbf{z}^{2}}\,\left(1-\theta\right)d\theta\right)u\,\partial_{s}^{2}u-\,2\left(\int_{0}^{1}\,\sum_{i=1}^{n-1}\sigma s\,\partial_{ni}^{2}f\circ\omega\frac{2\textbf{q}\textbf{p}}{\left(1+\textbf{p}^{2}\right)^{2}\textbf{z}^{2}}\,\left(1-\theta\right)d\theta\right)u\,\partial_{s}u\right\}
\end{align*}
  \end{minipage}
}\\\\
where $\textbf{z}=\textbf{z}\left(u\left(s\right);\,\theta\right)$,
$\textbf{p}=\textbf{p}\left(u\left(s\right);\,\theta\right)$,
$\textbf{q}=\textbf{q}\left(u\left(s\right);\,\theta\right)$.

Now reorganize (\ref{166}) to get an equation of $u$ as follows:
\begin{equation}
\mathcal{L}u\,=\,\frac{1+\sigma^{2}}{\partial_{n}f\left(\overrightarrow{1},\,0\right)}\left(\frac{f\left(\overrightarrow{1},\,0\right)}{\sigma s}-\,\sum_{i=1}^{n-1}\frac{\partial_{i}f\left(\overrightarrow{1},\,0\right)}{\sigma^{2}s^{2}}u+\,\mathcal{Q}u\right)\label{169}
\end{equation}
where $\mathcal{L}$ is a linear differential operator defined by
\begin{equation}
\mathcal{L}u\,=\,\partial_{s}^{2}u-\,\frac{1}{2}\frac{1+\sigma^{2}}{\partial_{n}f\left(\overrightarrow{1},\,0\right)}\left(s\,\partial_{s}u-u\right)\label{170}
\end{equation}

To summarize, in order to find a rotationally symmetric $f$ self-shrinker
$\Sigma$ which is $C^{\mathsf{k}}$ asymptotic to $\mathcal{C}$
at infinity, it suffices to solve the ODE (\ref{158}) satisfying the condition
(\ref{160}). By (\ref{161}), that amounts to solving the problem
(\ref{169}) satisfying the condition (\ref{162}). We do this by regarding (\ref{169})
as a fixed point problem of a nonlinear map in a Banach space
where (\ref{162}) is satisfied and the nonlinear map
is a contraction. The existence of solutions is then
assured by Banach fixed point theorem.\\ 
To this end, we would first study the corresponding linear problem and get estimates of solutions, which are crucial to solving the nonlinear problem. Below are two lemmas to achieve that.

In the first lemma, we analyze the linear differential operator $\mathcal{L}$
in (\ref{170}). We derive a representation formula for the associated Cauchy
problem as in \cite{KM} (see (\ref{180})). Then we use that to estimate solutions.
\begin{lem}\label{l29}
Fix $R>0$, then for any function $\boldsymbol{\eta}\in C_{0}[R,\,\infty)$
(i.e. $\boldsymbol{\eta}\in C\,[R,\,\infty)$ and $\boldsymbol{\eta}\rightarrow0$
as $s\nearrow\infty$), there is a unique $C^{2}[R,\,\infty)$ solution
$\textrm{w}$ to the following problem:
\begin{equation}
\mathcal{L}\textrm{w}=\boldsymbol{\eta}\quad\textrm{on}\;[R,\,\infty)\label{171}
\end{equation}
\begin{equation}
\frac{\textrm{w}}{s}\rightarrow0, \,\left(s\,\partial_{s}\textrm{w}-\textrm{w}\right)\rightarrow0\quad\textrm{as}\; s\nearrow\infty\label{172}
\end{equation}
where $\mathcal{L}$ is defined in (\ref{170}).
Moreover, $\textrm{w}$
satisfies the following estimates. For any $\gamma\geq0$, there hold 
\begin{equation}
\max\left\{ \parallel s^{\gamma}\textrm{w}\parallel_{L^{\infty}[R,\,\infty)},\;\parallel s^{\gamma+1}\partial_{s}\textrm{w}\parallel_{L^{\infty}[R,\,\infty)}\right\} \,\leq\,4\frac{\partial_{n}f\left(\overrightarrow{1},\,0\right)}{1+\sigma^{2}}\parallel s^{\gamma}\boldsymbol{\eta}\parallel_{L^{\infty}[R,\,\infty)}\label{173}
\end{equation}
\begin{equation}
\parallel s^{\gamma}\partial_{s}^{2}\textrm{w}\parallel_{L^{\infty}[R,\,\infty)}\,\leq\,4\parallel s^{\gamma}\boldsymbol{\eta}\parallel_{L^{\infty}[R,\,\infty)}\label{174}
\end{equation}\\
In addition, $\textrm{w}\in C_{0}^{m+2}[R,\,\infty)$ whenever $\boldsymbol{\eta}\in C_{0}^{m}[R,\,\infty)$
for some $m\in\mathbb{N}$.\end{lem}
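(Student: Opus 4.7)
The plan is to reduce the second-order problem (\ref{171})--(\ref{172}) to a first order ODE via the substitution $\psi \equiv s\,\partial_s w - w$, which exploits the identity $\partial_s\psi = s\,\partial_s^{2}w$. Setting $\alpha = (1+\sigma^{2})/(2\,\partial_n f(\overrightarrow{1},0))$ so that $\mathcal{L}w = \partial_s^{2}w - \alpha\,\psi$, the Cauchy problem becomes
\[
\partial_s\psi - \alpha s\,\psi \;=\; s\,\boldsymbol{\eta},\qquad \psi(s)\to 0 \text{ as } s\to\infty.
\]
The integrating factor $e^{-\alpha s^{2}/2}$ and integration from $s$ to $\infty$ yield
\[
\psi(s) \;=\; -\,e^{\alpha s^{2}/2}\int_s^\infty x\,\boldsymbol{\eta}(x)\,e^{-\alpha x^{2}/2}\,dx,
\]
and since $\psi = s^{2}\partial_s(w/s)$, the condition $w/s\to 0$ then determines
\[
w(s)\;=\;-\,s\int_s^\infty \frac{e^{\alpha x^{2}/2}}{x^{2}}\!\left(\int_x^\infty \xi\,\boldsymbol{\eta}(\xi)\,e^{-\alpha \xi^{2}/2}\,d\xi\right)dx,
\]
which is the analogue for $\mathcal{L}$ of the Kleene--Moller representation formula displayed in the introduction. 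Uniqueness is then immediate by reading the derivation backward: $\mathcal{L}w = 0$ with (\ref{172}) forces $\psi\equiv 0$ and then $w\equiv 0$.

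All four estimates fall out of the single Gaussian tail bound
\[
e^{\alpha s^{2}/2}\int_s^\infty x^{1-\gamma}\,e^{-\alpha x^{2}/2}\,dx \;\leq\; \frac{s^{-\gamma}}{\alpha}\qquad(\gamma\geq 0),
\]
obtained by pulling $x^{-\gamma}\leq s^{-\gamma}$ out of the integral and computing the remaining Gaussian. Applied to $\psi$ this gives $|\psi(s)|\leq \alpha^{-1}s^{-\gamma}\|s^{\gamma}\boldsymbol{\eta}\|_\infty$; a further integration of $\psi(x)/x^{2}$ from $s$ to $\infty$ controls $s^{\gamma}w$; the algebraic identity $\partial_s w = (w+\psi)/s$ then controls $s^{\gamma+1}\partial_s w$; and the equation itself, $\partial_s^{2}w = \boldsymbol{\eta} + \alpha\psi$, controls $s^{\gamma}\partial_s^{2}w$. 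Summing the contributions, the resulting constants fit comfortably inside $2/\alpha = 4\,\partial_n f(\overrightarrow{1},0)/(1+\sigma^{2})$, which is the prefactor required in (\ref{173}) and (\ref{174}).

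The decay conditions (\ref{172}) for the $w$ so constructed are verified by a routine tail argument: since $\boldsymbol{\eta}\in C_{0}$, for every $\varepsilon>0$ we have $|\boldsymbol{\eta}(x)|<\varepsilon$ once $x$ is large, giving $|\psi(s)|\leq \varepsilon/\alpha$ for large $s$, and then $|w/s|\leq\int_s^\infty|\psi|/x^{2}\,dx\to 0$. The higher regularity assertion is proved by induction on $m$ using the ODE $\partial_s\psi = s\boldsymbol{\eta} + \alpha s\psi$: when $\boldsymbol{\eta}\in C_{0}^{m}$ one obtains $\psi\in C_{0}^{m+1}$, each derivative inheriting decay at infinity from the same Gaussian tail bounds applied to $\boldsymbol{\eta}$ and its lower derivatives, and then $\partial_s^{2}w = \boldsymbol{\eta}+\alpha\psi$ upgrades this to $w\in C_{0}^{m+2}$. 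I do not foresee any real obstacle here; the only care required is to keep track of the Gaussian weights carefully so the constants in (\ref{173})--(\ref{174}) emerge with the prescribed factor of $4$.
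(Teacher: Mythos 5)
Your proposal is correct and follows essentially the same route as the paper: both reduce $\mathcal{L}\textrm{w}=\boldsymbol{\eta}$ with the decay conditions to a first-order linear ODE for $s\,\partial_{s}\textrm{w}-\textrm{w}=s^{2}\partial_{s}\left(\textrm{w}/s\right)$, arrive at the same representation formula (\ref{180}), and extract all four estimates plus the higher regularity from the same Gaussian tail bound (\ref{179}). The only flaw is a sign slip in your displayed formula for $\textrm{w}$: the minus sign from $\psi$ and the minus sign from integrating $\partial_{s}\left(\textrm{w}/s\right)$ cancel, so the leading sign should be $+$, consistent with (\ref{180}) and with the Kleene--Moller formula you cite; the estimates are unaffected since they only use absolute values.
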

\begin{proof}
Firstly, suppose that $\textrm{w}\in C^{2}[R,\,\infty)$ is a solution to
the linear problem (\ref{171}) and (\ref{172}), it must satisfy the following. Given a sequence $\left\{ R_{j}\in\left(R,\,\infty\right)\right\} _{j\in\mathbb{N}}$
so that $R_{j}\nearrow\infty$ as $j\nearrow\infty$, then from
(\ref{170}), (\ref{171}) and (\ref{172}), we get
\begin{equation}
\partial_{s}^{2}\left(\frac{\textrm{w}}{s}\right)+\,\left(\frac{2}{s}-\,\frac{s}{2}\frac{1+\sigma^{2}}{\partial_{n}f\left(\overrightarrow{1},\,0\right)}\right)\partial_{s}\left(\frac{\textrm{w}}{s}\right)\,=\frac{\boldsymbol{\eta}}{s}\quad\textrm{for}\; s\in[R,\, R_{j})\label{175}
\end{equation}
\begin{equation}
\frac{\textrm{w}(R_{j})}{R_{j}}\rightarrow0,\, \left(R_{j}\,\partial_{s}\textrm{w}(R_{j})-\textrm{w}(R_{j})\right)\rightarrow0\quad\textrm{as}\; j\nearrow\infty\label{176}
\end{equation}
From (\ref{175}), which is a first order linear ODE in $\partial_{s}\left(\frac{\textrm{w}}{s}\right)$, we get

\noindent \resizebox{1.0\linewidth}{!}{
  \begin{minipage}{\linewidth}
  \begin{align}
\textrm{w}(s)=\, s\left\{ \frac{\textrm{w}(R_{j})}{R_{j}}-\,\left(R_{j}\,\partial_{s}\textrm{w}(R_{j})-\textrm{w}(R_{j})\right)\int_{s}^{R_{j}}x^{-2}\,\exp\left(-\frac{1}{2}\int_{x}^{R_{j}}\boldsymbol{\tau}\frac{1+\sigma^{2}}{\partial_{n}f\left(\overrightarrow{1},\,0\right)}d\boldsymbol{\tau}\right)dx\right\}\,+ \label{177}
\end{align}
  \end{minipage}
}

\[
s\int_{s}^{R_{j}}\frac{1}{x^{2}}\left(\int_{x}^{R_{j}}\xi\,\exp\left(-\frac{1}{2}\int_{x}^{\xi}\boldsymbol{\tau}\frac{1+\sigma^{2}}{\partial_{n}f\left(\overrightarrow{1},\,0\right)}d\boldsymbol{\tau}\right)\,\boldsymbol{\eta}(\xi)\, d\xi\right)dx 
\]

\noindent \resizebox{1.0\linewidth}{!}{
  \begin{minipage}{\linewidth}
  \begin{align*}
=s\left\{ \frac{\textrm{w}(R_{j})}{R_{j}}-\,\left(R_{j}\,\partial_{s}\textrm{w}(R_{j})-\textrm{w}(R_{j})\right)\int_{s}^{R_{j}}x^{-2}\,\exp\left(-\frac{1}{4}\frac{1+\sigma^{2}}{\partial_{n}f\left(\overrightarrow{1},\,0\right)}\left(R_{j}^{2}-x^{2}\right)\right)\, dx\right\}\,+
\end{align*}
  \end{minipage}
}

\[
s\int_{s}^{R_{j}}\frac{1}{x^{2}}\left(\int_{x}^{R_{j}}\xi\,\exp\left(-\frac{1}{4}\frac{1+\sigma^{2}}{\partial_{n}f\left(\overrightarrow{1},\,0\right)}\left(\xi^{2}-x^{2}\right)\right)\boldsymbol{\eta}(\xi)\, d\xi\right)dx
\]
Note that in the last two terms of (\ref{177}), there hold
\begin{equation}
\int_{s}^{R_{j}}x^{-2}\,\exp\left(-\frac{1}{4}\frac{1+\sigma^{2}}{\partial_{n}f\left(\overrightarrow{1},\,0\right)}\left(R_{j}^{2}-x^{2}\right)\right)\, dx\,\leq\,\int_{s}^{\infty}x^{-2}dx\,\,=\,\frac{1}{s}\label{178}
\end{equation}
and
\begin{equation}
\int_{x}^{R_{j}}\xi\,\exp\left(-\frac{1}{4}\frac{1+\sigma^{2}}{\partial_{n}f\left(\overrightarrow{1},\,0\right)}\left(\xi^{2}-x^{2}\right)\right)\, d\xi\label{179}
\end{equation}
\[
\leq\,\int_{x}^{\infty}\xi\,\exp\left(-\frac{1}{4}\frac{1+\sigma^{2}}{\partial_{n}f\left(\overrightarrow{1},\,0\right)}\left(\xi^{2}-x^{2}\right)\right)\, d\xi\,=\,\,2\frac{\partial_{n}f\left(\overrightarrow{1},\,0\right)}{1+\sigma^{2}}
\]
Fix $s\in[R,\,\infty)$, by (\ref{176}) and (\ref{178}), one could take a limit (as $j\nearrow\infty$) in
(\ref{177}) to get 
\begin{equation}
\textrm{w}(s)=\, s\int_{s}^{\infty}\frac{1}{x^{2}}\left(\int_{x}^{\infty}\xi\,\exp\left(-\frac{1}{4}\frac{1+\sigma^{2}}{\partial_{n}f\left(\overrightarrow{1},\,0\right)}\left(\xi^{2}-x^{2}\right)\right)\boldsymbol{\eta}(\xi)\, d\xi\right)dx \label{180}
\end{equation}

Conversely, if we define a function $\textrm{w}$ by (\ref{180}),
then $\textrm{w}\in C^{2}[R,\,\infty)$ and it satisfies
\begin{equation}
\partial_{s}\textrm{w}=\,\int_{s}^{\infty}\frac{1}{x^{2}}\left(\int_{x}^{\infty}\xi\,\exp\left(-\frac{1}{4}\frac{1+\sigma^{2}}{\partial_{n}f\left(\overrightarrow{1},\,0\right)}\left(\xi^{2}-x^{2}\right)\right)\boldsymbol{\eta}(\xi)\, d\xi\right)dx\,-\label{181}
\end{equation}
\[
\frac{1}{s}\int_{s}^{\infty}\xi\,\exp\left(-\frac{1}{4}\frac{1+\sigma^{2}}{\partial_{n}f\left(\overrightarrow{1},\,0\right)}\left(\xi^{2}-s^{2}\right)\right)\boldsymbol{\eta}(\xi)\, d\xi
\]
\begin{equation}
\partial_{s}^{2}\textrm{w}=\,\boldsymbol{\eta}(s)\,-\frac{1}{2}\frac{1+\sigma^{2}}{\partial_{n}f\left(\overrightarrow{1},\,0\right)}\int_{s}^{\infty}\xi\,\exp\left(-\frac{1}{4}\frac{1+\sigma^{2}}{\partial_{n}f\left(\overrightarrow{1},\,0\right)}\left(\xi^{2}-s^{2}\right)\right)\boldsymbol{\eta}(\xi)\, d\xi \label{182}
\end{equation}
From (\ref{170}), (\ref{180}), (\ref{181}) and (\ref{182}), we
immediately get (\ref{171}). To verify (\ref{172}), we use (\ref{179})
and that $\boldsymbol{\eta}$ vanishes at infinity: 
\[
\frac{\textrm{w}}{s}=\,\int_{s}^{\infty}\frac{1}{x^{2}}\left(\int_{x}^{\infty}\xi\,\exp\left(-\frac{1}{4}\frac{1+\sigma^{2}}{\partial_{n}f\left(\overrightarrow{1},\,0\right)}\left(\xi^{2}-x^{2}\right)\right)\boldsymbol{\eta}(\xi)\, d\xi\right)dx
\]
\[
\leq\,2\frac{\partial_{n}f\left(\overrightarrow{1},\,0\right)}{1+\sigma^{2}}\left(\sup_{\xi>s}|\boldsymbol{\eta}(\xi)|\right)\int_{s}^{\infty}\frac{dx}{x^{2}}\,\rightarrow \,0\,\quad\textrm{as}\,\; s\nearrow\infty
\]
\[
|s\,\partial_{s}\textrm{w}-\textrm{w}|\,\leq\,\int_{s}^{\infty}\xi\,\exp\left(-\frac{1}{4}\frac{1+\sigma^{2}}{\partial_{n}f\left(\overrightarrow{1},\,0\right)}\left(\xi^{2}-s^{2}\right)\right)|\boldsymbol{\eta}(\xi)|\, d\xi
\]
\[
\leq\,2\frac{\partial_{n}f\left(\overrightarrow{1},\,0\right)}{1+\sigma^{2}}\left(\sup_{\xi>s}|\boldsymbol{\eta}(\xi)|\right)\rightarrow \,0\,\quad\textrm{as}\,\; s\nearrow\infty
\]
Thus, (\ref{180}) is the unique solution to the linear problem (\ref{171}) and (\ref{172}). \\

Now given $\gamma\geq0$, we would like to verify (\ref{173}). Note
that we may assume $\parallel s^{\gamma}\boldsymbol{\eta}\parallel_{L^{\infty}[R,\,\infty)}<\infty$;
otherwise there is nothing to check. For each $s\in[R,\,\infty)$,
by (\ref{180}), (\ref{181}) and (\ref{179}), we have 
\begin{equation}
s^{\gamma}|\textrm{w}(s)|\,\leq\, s\int_{s}^{\infty}\frac{1}{x^{2}}\left(\int_{x}^{\infty}\xi\,\exp\left(-\frac{1}{4}\frac{1+\sigma^{2}}{\partial_{n}f\left(\overrightarrow{1},\,0\right)}\left(\xi^{2}-s^{2}\right)\right)\xi^{\gamma}|\boldsymbol{\eta}(\xi)|\, d\xi\right)dx\label{183}
\end{equation}
\[
\leq\, s\int_{s}^{\infty}\frac{2}{x^{2}}\,\frac{\partial_{n}f\left(\overrightarrow{1},\,0\right)}{1+\sigma^{2}}\,\left(\sup_{\xi\geq s}\left(\xi^{\gamma}|\boldsymbol{\eta}(\xi)|\right)\right)\, dx
\]
\[
=2\frac{\partial_{n}f\left(\overrightarrow{1},\,0\right)}{1+\sigma^{2}}\,\sup_{\xi\geq R}\left(\xi^{\gamma}|\boldsymbol{\eta}(\xi)|\right)
\]\\
and
\[
s^{\gamma+1}|\partial_{s}\textrm{w}(s)|\,\leq\, \left\{s\int_{s}^{\infty}\frac{1}{x^{2}}\left(\int_{x}^{\infty}\xi\,\exp\left(-\frac{1}{4}\frac{1+\sigma^{2}}{\partial_{n}f\left(\overrightarrow{1},\,0\right)}\left(\xi^{2}-s^{2}\right)\right)\xi^{\gamma}|\boldsymbol{\eta}(\xi)|\, d\xi\right)dx\right.
\]
\begin{equation}
\left.+\int_{s}^{\infty}\xi\,\exp\left(-\frac{1}{4}\frac{1+\sigma^{2}}{\partial_{n}f\left(\overrightarrow{1},\,0\right)}\left(\xi^{2}-s^{2}\right)\right)\xi^{\gamma}|\boldsymbol{\eta}(\xi)|\, d\xi \right\} \label{184}
\end{equation}
\[
\leq\,4\frac{\partial_{n}f\left(\overrightarrow{1},\,0\right)}{1+\sigma^{2}}\,\sup_{\xi\geq R}\left(\xi^{\gamma}|\boldsymbol{\eta}(\xi)|\right)
\]
As for (\ref{174}), we use the equation (\ref{171}) and (\ref{183}), (\ref{184}) to get 
\[
s^{\gamma}\left|\partial_{s}^{2}\textrm{w}(s)\right|\,=\, s^{\gamma}\left|\frac{1}{2}\frac{1+\sigma^{2}}{\partial_{n}f\left(\overrightarrow{1},\,0\right)}\left(s\,\partial_{s}\textrm{w}\left(s\right)-\textrm{w}\left(s\right)\right)+\boldsymbol{\eta}(s)\right|
\]
\[
\leq\,\frac{1}{2}\frac{1+\sigma^{2}}{\partial_{n}f\left(\overrightarrow{1},\,0\right)}\left(s^{\gamma+1}|\partial_{s}\textrm{w}(s)|\,+\, s^{\gamma}|\textrm{w}(s)|\right)\,\,+\,\, s^{\gamma}|\boldsymbol{\eta}(s)|
\]
\[
\leq\,\, 4\,\sup_{\xi\geq R}\left(\xi^{\gamma}|\boldsymbol{\eta}(\xi)|\right)
\]

Lastly, one could see from (\ref{182}) that $\textrm{w}\in C_{0}^{m+2}[R,\,\infty)$
as long as $\boldsymbol{\eta}\in C_{0}^{m}[R,\,\infty)$ for $m\in\mathbb{N}$.
\end{proof}
Next, let's consider the following normed space which we are going to work with.
Fix $R>0$, define
\[
\boldsymbol{\parallel}v\boldsymbol{\parallel}_{\Im}\,=\,\max\Big\{\,\parallel s\, v\parallel_{L^{\infty}[R,\,\infty)},\;\parallel s^{2}\partial_{s}v\parallel_{L^{\infty}[R,\,\infty)},\;\cdots,\;\parallel s^{\mathsf{k}}\partial_{s}^{\mathsf{k}-1}v\parallel_{L^{\infty}[R,\,\infty)},
\]
\begin{equation}
\parallel s^{\mathsf{k}}\partial_{s}^{\mathsf{k}}v\parallel_{L^{\infty}[R,\,\infty)}\,\Big\}\label{185}
\end{equation}
and a vector space 
\begin{equation}
\Im=\left\{ v\in C_{0}^{\mathsf{k}}[R,\,\infty)\Big|\,\boldsymbol{\parallel}v\boldsymbol{\parallel}_{\Im}<\infty\right\} \label{186}
\end{equation}

Note that the norm for the $\mathsf{k}^{\text{th}}$ order derivative has a weight $s^{\mathsf{k}}$ instead of $s^{\mathsf{k}+1}$ as expected, and that $v\in\Im$ if and only if $v\in C_{0}^{\mathsf{k}}[R,\,\infty)$
which decays at infinity at the following rate: 
\[
v=O\left(s^{-1}\right),\;\partial_{s}v=O\left(s^{-2}\right),\cdots,\;\partial_{s}^{\mathsf{k}-1}v=O\left(s^{-\mathsf{k}}\right),\, \partial_{s}^{\mathsf{k}}v=O\left(s^{-\mathsf{k}}\right)\quad\textrm{as}\; s\nearrow\infty
\]
For instance, $s^{-1}\in\Im$. Also, $\Im$ with the norm $\boldsymbol{\parallel}\cdot\boldsymbol{\parallel}_{\Im}$
is a Banach space. \\

In the following lemma, we estimate $\mathcal{Q}v$
in (\ref{167}) for $v\in\Im$, which is then used to bound the RHS of (\ref{169}).
\begin{lem}\label{l30}
Given $M>0$, there is $R=R\left(\mathsf{k},\,\mathcal{C},\,\mathcal{K},\, M\right)\geq1$
so that for any $v\in\Im$ with $\boldsymbol{\parallel}v\boldsymbol{\parallel}_{\Im}\leq M$,
we have 
\begin{equation}
\omega\left(v;\,\theta\right)\in\mathcal{K}\quad\forall\,\,\theta\in\left[0,\,1\right]\label{187}
\end{equation}
where $\omega\left(v;\,\theta\right)$
is defined by (\ref{168}) and $\mathcal{K}$ is a bounded neighborhood of $\left(\overrightarrow{1},\,0\right)$
in $\mathbb{R}^{n}$ stated in the main theorem. 
Moreover, $\mathcal{Q}v$ (defined in (\ref{167})) belongs to $C_{0}^{\mathsf{k}-2}[R,\,\infty)$ and satisfies
\[
\max\Big\{\parallel s^{5}\mathcal{Q}v\parallel_{L^{\infty}[R,\,\infty)},\;\parallel s^{6}\partial_{s}\mathcal{Q}v\parallel_{L^{\infty}[R,\,\infty)},\cdots,\;\parallel s^{\mathsf{k}+2}\partial_{s}^{\mathsf{k}-3}\mathcal{Q}v\parallel_{L^{\infty}[R,\,\infty)},
\]
\begin{equation}
\parallel s^{\mathsf{k}+2}\partial_{s}^{\mathsf{k}-2}\mathcal{Q}v\parallel_{L^{\infty}[R,\,\infty)}\Big\}\,\leq\, C\left(n,\,\mathsf{k},\mathcal{\, C},\,\parallel f\parallel_{C^{\mathsf{k}}\left(\mathcal{K}\right)},\, M\right)\label{188}
\end{equation}
\\
In addition, given a function $\tilde{v}\in\Im$ with $\boldsymbol{\parallel}\tilde{v}\boldsymbol{\parallel}_{\Im}\,\leq M$,
there holds

\noindent \resizebox{1.0\linewidth}{!}{
  \begin{minipage}{\linewidth}
  \begin{align}
\max\Big\{\parallel s^{5}\left(\mathcal{Q}v-\mathcal{Q}\tilde{v}\right)\parallel_{L^{\infty}[R,\,\infty)},\;\parallel s^{6}\left(\partial_{s}\mathcal{Q}v-\partial_{s}\mathcal{Q}\tilde{v}\right)\parallel_{L^{\infty}[R,\,\infty)},\cdots,\;\parallel s^{\mathsf{k}+2}\left(\partial_{s}^{\mathsf{k}-3}\mathcal{Q}v-\partial_{s}^{\mathsf{k}-3}\mathcal{Q}\tilde{v}\right)\parallel_{L^{\infty}[R,\,\infty)}, \label{189}
\end{align}
  \end{minipage}
}

\[
\parallel s^{\mathsf{k}+2}\left(\partial_{s}^{\mathsf{k}-2}\mathcal{Q}v-\partial_{s}^{\mathsf{k}-2}\mathcal{Q}\tilde{v}\right)\parallel_{L^{\infty}[R,\,\infty)}\Big\}
\]
\[
\leq\, C\left(n,\,\mathsf{k},\mathcal{\, C},\,\parallel f\parallel_{C^{\mathsf{k}+1}\left(\mathcal{K}\right)},\, M\right)\boldsymbol{\parallel}v-\tilde{v}\boldsymbol{\parallel}_{\Im}
\]
\end{lem}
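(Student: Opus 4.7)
The plan is to exploit the fact that every occurrence of $(\partial^\alpha f)\circ\omega$ in (\ref{167}) is bounded by $\|f\|_{C^{|\alpha|}(\mathcal{K})}$ once we have (\ref{187}), combined with the pointwise decay $|\partial_s^\ell v|\leq Ms^{-(\ell+1)}$ for $0\leq\ell\leq\mathsf{k}-1$ and $|\partial_s^\mathsf{k} v|\leq Ms^{-\mathsf{k}}$ supplied by $\boldsymbol{\parallel}v\boldsymbol{\parallel}_\Im\leq M$. First I would establish (\ref{187}): by (\ref{168}) the two arguments of $\omega$ differ from $(\overrightarrow{1},0)$ by $\theta v/(\sigma s)=O(M/s^2)$ and $-2\theta\sigma s\,\partial_s^2 v/(1+(\sigma+\theta\partial_s v)^2)=O(M/s^2)$ respectively, uniformly in $\theta\in[0,1]$, so choosing $R$ large (depending only on $\mathsf{k},\mathcal{C},\mathcal{K},M$) forces $\omega([R,\infty)\times[0,1])\subset \mathcal{K}$.

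For the pointwise part of (\ref{188}) I would bound the six coefficients in (\ref{167}) using $\textbf{z}\sim\sigma s$, $\textbf{p}\sim\sigma$, $\textbf{q}=O(s^{-3})$ and $(\partial^\alpha f)\circ\omega=O(1)$. The dominant coefficients are $O(s)$ in front of $(\partial_s^2 v)^2$, $O(s^{-3})$ in front of $v^2$, $O(1)$ in front of $\partial_s^2 v\,\partial_s v$, and $O(s^{-1})$ in front of $v\,\partial_s^2 v$; multiplying by the quadratic monomials yields $s^{-5}$ in each case, while the remaining two terms decay strictly faster. For $1\leq\ell\leq\mathsf{k}-2$ I would apply Leibniz to each summand: every differentiation either lowers $\partial_s^j v$ to $\partial_s^{j+1}v$ (gaining a factor $s^{-1}$ provided $j+1\leq\mathsf{k}$) or falls on a coefficient via the chain rule $\partial_s[(\partial^\alpha f)\circ\omega]=\sum_\beta (\partial_\beta\partial^\alpha f)\circ\omega\,\partial_s\omega_\beta$, where each $\partial_s\omega_\beta$ is $O(s^{-3})$, again gaining at least $s^{-1}$. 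Consequently the weighted norm grows by one power of $s$ per differentiation up to $\ell=\mathsf{k}-3$; at $\ell=\mathsf{k}-2$ the worst summand of each term is forced to produce $\partial_s^\mathsf{k} v$, which is only $O(s^{-\mathsf{k}})$ rather than $O(s^{-(\mathsf{k}+1)})$, so the weight saturates at $s^{\mathsf{k}+2}$, matching the asymmetry already present in (\ref{185}). The regularity needed here is only $f\in C^{\mathsf{k}}(\mathcal{K})$.

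For the Lipschitz estimate (\ref{189}), I would write each summand of (\ref{167}) in the schematic form $A(v)\,B(v)$, where $A(v)$ is the $\theta$-integrated coefficient assembled from $\omega,\textbf{p},\textbf{q},\textbf{z}$ and the various $(\partial^\alpha f)\circ\omega$, and $B(v)$ is a monomial of degree two in $(v,\partial_s v,\partial_s^2 v)$. The polarization
\[
A(v)B(v)-A(\tilde v)B(\tilde v)\,=\,A(v)\bigl(B(v)-B(\tilde v)\bigr)\,+\,\bigl(A(v)-A(\tilde v)\bigr)B(\tilde v)
\]
reduces the problem to two pieces. For $B(v)-B(\tilde v)$, write, for instance, $(\partial_s^2 v)^2-(\partial_s^2\tilde v)^2=\partial_s^2(v-\tilde v)\cdot\partial_s^2(v+\tilde v)$, which produces a factor $\boldsymbol{\parallel}v-\tilde v\boldsymbol{\parallel}_\Im$ together with the same weights as in (\ref{188}). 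For $A(v)-A(\tilde v)$, apply the fundamental theorem of calculus to $\tau\mapsto A(\tau v+(1-\tau)\tilde v)$ followed by the chain rule; this calls for one extra derivative of each $\partial^\alpha f$ appearing in (\ref{167}), which is precisely why the Lipschitz estimate uses the full $C^{\mathsf{k}+1}(\mathcal{K})$-regularity rather than $C^{\mathsf{k}}$. The main obstacle throughout is combinatorial: after Leibniz expansion of $\partial_s^{\mathsf{k}-2}$ of each of the six summands of (\ref{167}) and polarization, there are many terms to catalogue, but the accounting is mechanical once one observes that every differentiation gains at least one power of $s^{-1}$, subject only to the saturation caused by $\partial_s^\mathsf{k} v$ decaying no faster than $s^{-\mathsf{k}}$.
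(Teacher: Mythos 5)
Your proposal is correct, and the substance of the estimates is the same as the paper's: (i) the two arguments of $\omega$ deviate from $\left(\overrightarrow{1},\,0\right)$ by $O\left(M/s^{2}\right)$, giving (\ref{187}) for $R$ large; (ii) each differentiation of a factor in (\ref{167}) lowers its order by one power of $s$, except when $\partial_{s}^{\mathsf{k}}v$ is produced, which accounts for the saturation of the weight at $s^{\mathsf{k}+2}$ for the top-order derivative; (iii) the Lipschitz bound costs one extra derivative of $f$, via a telescoping/mean-value decomposition of the coefficients. Where you differ is purely in the bookkeeping device. The paper does not carry the weights through a Leibniz expansion; instead it rescales, setting $s=\mathit{\Lambda}\xi$ and $\widehat{v}(\xi)=\mathit{\Lambda}\,v(\mathit{\Lambda}\xi)$ on $\xi\in\left[\frac{1}{2},\,1\right]$, so that $\boldsymbol{\parallel}v\boldsymbol{\parallel}_{\Im}\leq M$ becomes the unweighted bound $\parallel\widehat{v}\parallel_{C^{\mathsf{k}-1}\left[\frac{1}{2},\,1\right]}\leq 2^{\mathsf{k}}M$ (with $\parallel\partial_{\xi}^{\mathsf{k}}\widehat{v}\parallel_{L^{\infty}}\leq 2^{\mathsf{k}}M\mathit{\Lambda}$, the source of the top-order anomaly), and $\widehat{\mathcal{Q}v}=\mathit{\Lambda}^{5}\mathcal{Q}v(\mathit{\Lambda}\xi)$ is bounded in $C^{\mathsf{k}-2}\left[\frac{1}{2},\,1\right]$ by inspection of the rescaled formula (\ref{198}); undoing the change of variables then yields the weighted decay. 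The rescaling buys a cleaner separation between the decay rate (read off from the powers of $\mathit{\Lambda}$ in (\ref{194})--(\ref{197})) and the smoothness of $f$, and makes the claim that $\partial_{\xi}^{\mathsf{k}-2}\widehat{\mathcal{Q}v}$ is linear in $\partial_{\xi}^{\mathsf{k}}\widehat{v}$ transparent; your direct approach avoids introducing the scale parameter but requires the careful term-by-term audit you describe, including the chain-rule terms $\left(\partial_{\beta}\partial^{\alpha}f\right)\circ\omega\,\partial_{s}^{j}\omega_{\beta}$ in which $\partial_{s}^{\mathsf{k}}v$ can also appear through $\omega_{n}$. Both routes are complete and give the same constants up to the dependence stated in the lemma.
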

\begin{proof}
In order to prove the decay, we would rescale the variables in such a way that the normalized variables are bounded by a constant depending only on
$n,\,\mathsf{k},\,\mathcal{C}$, $\left\Vert f\right\Vert_{C^{\mathsf{k}+1}\left(\mathcal{K}\right)}$ and $M$.

Let $R\geq1$ be a constant to be determined. For each $\mathit{\Lambda}\geq2R$,
consider the following change of variables:
\begin{equation}
s=\mathit{\Lambda}\xi\label{190}
\end{equation}
\begin{equation}
\widehat{v}(\xi)=\mathit{\Lambda}\, v\left(\mathit{\Lambda}\xi\right)\label{191}
\end{equation}
for $\xi\in\left[\frac{1}{2},\,1\right].$ 

From (\ref{185}), the condition $\boldsymbol{\parallel}v\boldsymbol{\parallel}_{\Im}\,\leq M$
implies that
\begin{equation}
\parallel\widehat{v}\parallel_{C^{\mathsf{k}-1}\left[\frac{1}{2},\,1\right]}\,\equiv\,\max\left\{ \parallel\widehat{v}\parallel_{L^{\infty}\left[\frac{1}{2},\,1\right]},\cdots,\;\parallel\partial_{\xi}^{\mathsf{k}-1}\widehat{v}\parallel_{L^{\infty}\left[\frac{1}{2},\,1\right]}\right\} \,\leq\, 2^k M\label{192}
\end{equation}
 
\[
\parallel\partial_{\xi}^{\mathsf{k}}\widehat{v}\parallel_{L^{\infty}\left[\frac{1}{2},\,1\right]}\,\,\leq\, 2^k M\mathit{\Lambda}
\]
Note that the bound for $\partial_{\xi}^{\mathsf{k}}\widehat{v}$ depends on the scale $\mathit{\Lambda}$ because the  $\mathsf{k}^{\text{th}}$ order derivative decays like ${s}^{-\mathsf{k}}$ instead of ${s}^{-\mathsf{k}-1}$ (see (\ref{185})).
And (\ref{168}) is translated into 
\[
\omega\left(v\left(s\right);\,\theta\right)=\left(\frac{1}{1+\mathit{\Lambda}^{-2}\theta\frac{\widehat{v}}{\sigma\xi}}\overrightarrow{1},\,\frac{-2\mathit{\Lambda}^{-2}\theta\sigma\xi\,\partial_{\xi}^{2}\widehat{v}}{1+\left(\sigma+\,\mathit{\Lambda}^{-2}\theta\,\partial_{\xi}\widehat{v}\right)^{2}}\right)
\]
which we denote by
\begin{equation}
\widehat{\omega}\left(\widehat{v}\left(\xi\right);\,\theta\right)\equiv\left(\frac{1}{1+\mathit{\Lambda}^{-2}\theta\frac{\widehat{v}}{\sigma\xi}}\overrightarrow{1},\,\frac{-2\mathit{\Lambda}^{-2}\theta\sigma\xi\,\partial_{\xi}^{2}\widehat{v}}{1+\left(\sigma+\,\mathit{\Lambda}^{-2}\theta\,\partial_{\xi}\widehat{v}\right)^{2}}\right)\label{193}
\end{equation}
Thus, by (\ref{193}) and (\ref{192}), (\ref{187}) holds if
$R$ is chosen sufficiently large (depending on $\mathsf{k},\,\mathcal{C},\,\mathcal{K},\, M$).

Next, we rescale (\ref{163}), (\ref{164}), (\ref{165}) and (\ref{167}) in the following way:
\begin{equation}
\widehat{\textbf{z}}\left(\widehat{v}\left(\xi\right),\,\theta\right)\equiv\mathit{\Lambda}^{-1}\textbf{z}\left(v\left(s\right),\,\theta\right)=\sigma\xi+\mathit{\Lambda}^{-2}\theta\,\widehat{v}\left(\xi\right)\label{194}
\end{equation}
\begin{equation}
\widehat{\textbf{p}}\left(\widehat{v}\left(\xi\right),\,\theta\right)\equiv\textbf{p}\left(v\left(s\right),\,\theta\right)=\sigma+\mathit{\Lambda}^{-2}\theta\,\partial_{\xi}\widehat{v}\label{195}
\end{equation}
\begin{equation}
\widehat{\textbf{q}}\left(\widehat{v}\left(\xi\right),\,\theta\right)\equiv\mathit{\Lambda}^{3}\textbf{q}\left(v\left(s\right),\,\theta\right)=\theta\,\partial_{\xi}^{2}\widehat{v}\label{196}
\end{equation}
\begin{equation}
\widehat{\mathcal{Q}v}(\xi)=\mathit{\Lambda}^{5}\,\mathcal{Q}v\left(\mathit{\Lambda}\xi\right)\label{197}
\end{equation}
Expand (\ref{197}) by the formula  (\ref{167}) and  write it in terms of (\ref{193}),
(\ref{194}), (\ref{195}) and (\ref{196}):
\begin{equation}
\widehat{\mathcal{Q}v}(\xi)=\left(\int_{0}^{1}\frac{\sigma\xi\,\partial_{nn}^{2}f\circ\widehat{\omega}}{\left(1+\widehat{\textbf{p}}^{2}\right)^{2}}\,\left(1-\theta\right)d\theta\right)\left(\partial_{\xi}^{2}\widehat{v}\right)^{2}\label{198}
\end{equation}
\[
+\left(\int_{0}^{1}\left(\frac{\sigma\xi\,\partial_{nn}^{2}f\circ\widehat{\omega}}{\mathit{\Lambda}^{4}}\frac{4\widehat{\textbf{q}}^{2}\widehat{\textbf{p}}^{2}}{\left(1+\widehat{\textbf{p}}^{2}\right)^{4}}+\,\frac{\partial_{n}f\circ\widehat{\omega}}{\mathit{\Lambda}^{2}}\frac{2\widehat{\textbf{q}}\left(1-3\widehat{\textbf{p}}^{2}\right)}{\left(1+\widehat{\textbf{p}}^{2}\right)^{3}}\right)\,\left(1-\theta\right)d\theta\right)\left(\partial_{\xi}\widehat{v}\right)^{2}
\]
\[
+\left(\int_{0}^{1}\frac{\left(\sum_{i,\, j=1}^{n-1}\sigma\xi\,\partial_{ij}^{2}f\circ\widehat{\omega}\right)+\,2\widehat{\textbf{z}}\,\sum_{i=1}^{n-1}\partial_{i}f\circ\widehat{\omega}}{\widehat{\textbf{z}}^{4}}\,\left(1-\theta\right)d\theta\right)\widehat{v}^{2}
\]
\[
+2\left(\int_{0}^{1}\left(\frac{\sigma\xi\,\partial_{nn}^{2}f\circ\widehat{\omega}}{\mathit{\Lambda}^{2}}\frac{-2\widehat{\textbf{q}}\widehat{\textbf{p}}}{\left(1+\widehat{\textbf{p}}^{2}\right)^{3}}+\,\partial_{n}f\circ\widehat{\omega}\frac{2\widehat{\textbf{p}}}{\left(1+\widehat{\textbf{p}}^{2}\right)^{2}}\right)\,\left(1-\theta\right)d\theta\right)\partial_{\xi}^{2}\widehat{v}\,\,\partial_{\xi}\widehat{v}
\]
\[
+2\left(\int_{0}^{1}\sum_{i=1}^{n-1}\frac{\sigma\xi\,\partial_{ni}^{2}f\circ\widehat{\omega}}{\left(1+\widehat{\textbf{p}}^{2}\right)\left(\widehat{\textbf{z}}\right)^{2}}\,\left(1-\theta\right)d\theta\right)\widehat{v}\,\,\partial_{\xi}^{2}\widehat{v}
\]
\[
-\,2\left(\int_{0}^{1}\sum_{i=1}^{n-1}\frac{\sigma\xi\,\partial_{ni}^{2}f\circ\widehat{\omega}}{\mathit{\Lambda}^{2}}\frac{2\widehat{\textbf{q}}\widehat{\textbf{p}}}{\left(1+\widehat{\textbf{p}}^{2}\right)^{2}\left(\widehat{\textbf{z}}\right)^{2}}\,\left(1-\theta\right)d\theta\right)\widehat{v}\,\,\partial_{\xi}\widehat{v}
\]
Then one could see that
\[
\widehat{\mathcal{Q}v}(\xi)\in C^{\mathsf{k}-2}\left[\frac{1}{2},\,1\right]
\]
By (\ref{192}), it satisfies

\begin{equation}
\parallel\widehat{\mathcal{Q}v}\parallel_{C^{\mathsf{k}-3}\left[\frac{1}{2},\,1\right]}\,\leq C\left(n,\,\mathsf{k},\mathcal{\, C},\,\parallel f\parallel_{C^{\mathsf{k}-1}\left(\mathcal{K}\right)},\, M\right)\label{199}
\end{equation}
\begin{equation}
\parallel\partial_{\xi}^{\mathsf{k}-2}\widehat{\mathcal{Q}v}\parallel_{L^{\infty}\left[\frac{1}{2},\,1\right]}\,\leq C\left(n,\,\mathsf{k},\mathcal{\, C},\,\parallel f\parallel_{C^{\mathsf{k}}\left(\mathcal{K}\right)},\, M\right)\mathit{\Lambda}\label{200}
\end{equation}
Note that in (\ref{200}), we have used the fact that $\mathsf{k}\geq3$
and $\partial_{\xi}^{\mathsf{k}-2}\widehat{\mathcal{Q}v}$
is linear in $\partial_{\xi}^{\mathsf{k}}\widehat{v}$.

Similarly, given a function $\tilde{v}\in\Im$ with $\boldsymbol{\parallel}\tilde{v}\boldsymbol{\parallel}_{\Im}\,\leq M$, we define $\widehat{\tilde{v}}(\xi)$ and $\widehat{\mathcal{Q}\tilde{v}}(\xi)$ in the same way as in (\ref{191}) and (\ref{197}). By doing subtraction of (\ref{198}), we get

\begin{equation}
\parallel\widehat{\mathcal{Q}v}-\widehat{\mathcal{Q}\tilde{v}}\parallel_{C^{\mathsf{k}-3}\left[\frac{1}{2},\,1\right]}\,\leq\, C\left(n,\,\mathsf{k},\mathcal{\, C},\,\parallel f\parallel_{C^{\mathsf{k}}\left(\mathcal{K}\right)},\, M\right)\parallel\widehat{v}-\widehat{\tilde{v}}\parallel_{C^{\mathsf{k}-1}\left[\frac{1}{2},\,1\right]}\label{201}
\end{equation}

\begin{equation}
\parallel\partial_{\xi}^{\mathsf{k}-2}\widehat{\mathcal{Q}v}-\partial_{\xi}^{\mathsf{k}-2}\widehat{\mathcal{Q}\tilde{v}}\parallel_{L^{\infty}\left[\frac{1}{2},\,1\right]}\label{202}
\end{equation}
\[
\leq\, C\left(n,\,\mathsf{k},\mathcal{\, C},\,\parallel f\parallel_{C^{\mathsf{k}+1}\left(\mathcal{K}\right)},\, M\right)\left(\parallel\partial_{\xi}^{\mathsf{k}}\widehat{v}-\partial_{\xi}^{\mathsf{k}}\widehat{\tilde{v}}\parallel_{L^{\infty}\left[\frac{1}{2},\,1\right]}+\mathit{\Lambda}\parallel\widehat{v}-\widehat{\tilde{v}}\parallel_{C^{\mathsf{k}-1}\left[\frac{1}{2},\,1\right]}\right)
\]

The conclusion follows immediately by undoing the change of variables for (\ref{199}), (\ref{200}),
(\ref{201}) and (\ref{202}).
\end{proof}

Now we are ready to show the existence of the problem (\ref{169}) and
(\ref{162}), which then yields the solution to (\ref{158}) and (\ref{160}) via (\ref{161}).

\begin{thm}\label{t31}
There exists $R=R\left(n,\,\mathsf{k},\mathcal{\, C},\,\mathcal{K},\,\parallel f\parallel_{C^{\mathsf{k}+1}\left(\mathcal{K}\right)}\right)\geq1$
and $u\in\Im$ such that 
\[
\mathcal{L}u=\frac{1+\sigma^{2}}{\partial_{n}f\left(\overrightarrow{1},\,0\right)}\left(\frac{f\left(\overrightarrow{1},\,0\right)}{\sigma s}-\,\sum_{i=1}^{n-1}\frac{\partial_{i}f\left(\overrightarrow{1},\,0\right)}{\sigma^{2}s^{2}}u+\,\mathcal{Q}u\right)\quad\textrm{on}\,\;[R,\,\infty)
\]
where $\Im$ is a subspace of $C_{0}^{\mathsf{k}}[R,\,\infty)$ defined
in (\ref{186}), $\mathcal{L}$ is the linear differential operator
defined in (\ref{170}) and $\mathcal{Q}$ is a nonlinear operator
defined in (\ref{167}).
Moreover, we have the following asymptotic formula: 
$$u\left(s\right)=\frac{f\left(\overrightarrow{1},\,0\right)}{\sigma s}+\overset{\circ}{\mathsf{u}}(s)$$
where the error term $\overset{\circ}{\mathsf{u}}(s)\in C_{0}^{\mathsf{k}}[R,\,\infty)$ satisfies

\noindent \resizebox{1.0\linewidth}{!}{
  \begin{minipage}{\linewidth}
  \begin{align*}
\parallel s^{3}\,\overset{\circ}{\mathsf{u}}(s)\parallel_{L^{\infty}[R,\,\infty)}+\parallel s^{4}\partial_{s}\overset{\circ}{\mathsf{u}}(s)\parallel_{L^{\infty}[R,\,\infty)}+\cdots+\parallel s^{\mathsf{k}+2}\partial_{s}^{\mathsf{k}-1}\overset{\circ}{\mathsf{u}}(s)\parallel_{L^{\infty}[R,\,\infty)}\,\leq C\left(n,\,\mathsf{k},\mathcal{\, C},\,\parallel f\parallel_{C^{\mathsf{k}}\left(\mathcal{\mathit{\mathcal{K}}}\right)}\right)
\end{align*}
 \end{minipage}
}

\[
\parallel s^{\mathsf{k}+1}\partial_{s}^{\mathsf{k}}\overset{\circ}{\mathsf{u}}(s)\parallel_{L^{\infty}[R,\,\infty)}\,\leq C\left(n,\,\mathsf{k},\mathcal{\, C},\,\mathcal{K},\,\parallel f\parallel_{C^{\mathsf{k}}\left(\mathcal{\mathit{\mathcal{K}}}\right)}\right)
\]
Note that $r\left(s\right)=\sigma s+u\left(s\right)$
solves
\[
f\left(\frac{1}{r}\overrightarrow{1},\;\frac{-\partial_{s}^{2}r}{1+\left(\partial_{\text{s}}r\right){}^{2}}\right)+\;\frac{1}{2}\left(s\,\partial_{s}r-r\right)=0\quad\textrm{on}\;[R,\,\infty)
\]
\[
\varrho\, r\left(\frac{s}{\varrho}\right)-\sigma s\overset{C_{\textrm{loc}}^{\mathsf{k}}}{\longrightarrow}0\quad\textrm{as}\;\varrho\searrow0
\]
\end{thm}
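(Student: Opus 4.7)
The strategy is to recast the problem as a fixed-point equation and invoke the Banach contraction principle on a closed ball $\overline{B}_M \subset \Im$. I would define $\mathcal{T}v = w$, where $w$ is the unique solution (produced by Lemma~\ref{l29}) to
\[
\mathcal{L}w \,=\, \frac{1+\sigma^{2}}{\partial_{n}f(\overrightarrow{1},0)}\left(\frac{f(\overrightarrow{1},0)}{\sigma s} \,-\, \sum_{i=1}^{n-1}\frac{\partial_{i}f(\overrightarrow{1},0)}{\sigma^{2}s^{2}}\,v \,+\, \mathcal{Q}v\right)
\]
with the boundary conditions $w/s \to 0$ and $s\partial_{s}w - w \to 0$ as $s \nearrow \infty$. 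Lemma~\ref{l30} guarantees the right-hand side lies in $C_{0}^{\mathsf{k}-2}[R,\infty)$ for $R$ large and $v \in \overline{B}_M$, so Lemma~\ref{l29} does yield such a $w \in C_{0}^{\mathsf{k}}[R,\infty)$.

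To show $\mathcal{T}(\overline{B}_M) \subset \overline{B}_M$, I would peel off the unavoidable $O(s^{-1})$ part of the source by writing $\mathcal{T}v = \frac{f(\overrightarrow{1},0)}{\sigma s} + \overset{\circ}{w}$. A direct computation gives
\[
\mathcal{L}\!\left(\frac{f(\overrightarrow{1},0)}{\sigma s}\right) \,=\, \frac{1+\sigma^{2}}{\partial_{n}f(\overrightarrow{1},0)}\cdot\frac{f(\overrightarrow{1},0)}{\sigma s} \,+\, \frac{2f(\overrightarrow{1},0)}{\sigma s^{3}},
\]
so using $v = O(s^{-1})$ and $\mathcal{Q}v = O(s^{-5})$ from Lemma~\ref{l30} one gets $\mathcal{L}\overset{\circ}{w} = O(s^{-3})$. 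Lemma~\ref{l29} with $\gamma = 3$ then delivers $\overset{\circ}{w},\,\partial_{s}^{2}\overset{\circ}{w} = O(s^{-3})$ and $\partial_{s}\overset{\circ}{w} = O(s^{-4})$, which together with the exact expansion of $\frac{f(\overrightarrow{1},0)}{\sigma s}$ controls $\|s\,\mathcal{T}v\|_{L^{\infty}}$, $\|s^{2}\partial_{s}\mathcal{T}v\|_{L^{\infty}}$, $\|s^{3}\partial_{s}^{2}\mathcal{T}v\|_{L^{\infty}}$. For the higher-order pieces of $\|\mathcal{T}v\|_{\Im}$ I would exploit the identity $\xi\exp(-\frac{A'}{4}(\xi^{2}-s^{2})) = -\frac{2}{A'}\partial_{\xi}\exp(-\frac{A'}{4}(\xi^{2}-s^{2}))$ (with $A' = (1+\sigma^{2})/\partial_{n}f(\overrightarrow{1},0)$) to integrate by parts inside the representation formula~\eqref{180}; this produces the clean cancellation $\partial_{s}^{2}w = -\int_{s}^{\infty}\exp(-\frac{A'}{4}(\xi^{2}-s^{2}))\,\eta'(\xi)\,d\xi$, and iterating the procedure gives $\partial_{s}^{j}\overset{\circ}{w} = O(s^{-3-j})$ for $2 \leq j \leq \mathsf{k}-1$, while $\partial_{s}^{\mathsf{k}}\overset{\circ}{w}$ inevitably loses one power of $s$ because of the $s\partial_{s}w$ term in $\mathcal{L}$ -- matching precisely the weight $s^{\mathsf{k}}$ (rather than $s^{\mathsf{k}+1}$) on the top derivative in \eqref{185}. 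Choosing $M$ large compared with $f(\overrightarrow{1},0)/\sigma$ and then $R$ large enough makes all these norms $\leq M$.

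The contraction property follows from
\[
\mathcal{L}(\mathcal{T}v - \mathcal{T}\tilde{v}) \,=\, \frac{1+\sigma^{2}}{\partial_{n}f(\overrightarrow{1},0)}\left(-\sum_{i=1}^{n-1}\frac{\partial_{i}f(\overrightarrow{1},0)}{\sigma^{2}s^{2}}(v-\tilde{v}) \,+\, (\mathcal{Q}v - \mathcal{Q}\tilde{v})\right),
\]
whose right-hand side is $O(s^{-3}\|v-\tilde{v}\|_{\Im})$ by the elementary bound $|v-\tilde{v}|/s^{2} \leq s^{-3}\|v-\tilde{v}\|_{\Im}$ and the Lipschitz estimate~\eqref{189} of Lemma~\ref{l30}. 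Lemma~\ref{l29} with $\gamma = 3$ and the same integration-by-parts bootstrap for higher derivatives then give $\|\mathcal{T}v - \mathcal{T}\tilde{v}\|_{\Im} \leq \frac{1}{2}\|v - \tilde{v}\|_{\Im}$ provided $R$ is large enough. Banach's fixed-point theorem then produces the unique $u \in \overline{B}_M$ with $\mathcal{T}u = u$, and setting $\overset{\circ}{\mathsf{u}} = u - \frac{f(\overrightarrow{1},0)}{\sigma s}$ delivers the asymptotic formula, the claimed decay bounds on $\overset{\circ}{\mathsf{u}}$ being exactly those already derived for $\overset{\circ}{w}$ in the mapping step.

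The main obstacle is the higher-derivative control: Lemma~\ref{l29} supplies explicit bounds only for $w,\,\partial_{s}w,\,\partial_{s}^{2}w$, so pushing the estimates up to order $\mathsf{k}$ (as required both by the $\Im$-norm and by the sharp asymptotic) demands the iterated integration-by-parts trick above, together with careful bookkeeping of the cancellations that let each derivative gain one extra power of $s^{-1}$ and of the inevitable one-power loss at the very top derivative.
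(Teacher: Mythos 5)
Your overall architecture coincides with the paper's: the same map $v\mapsto w$ built from Lemma \ref{l29}, the same peeling off of the explicit term $\frac{f\left(\overrightarrow{1},\,0\right)}{\sigma s}$ (your computation of $\mathcal{L}\bigl(f\left(\overrightarrow{1},\,0\right)/(\sigma s)\bigr)$ is exactly the cancellation behind (\ref{206})), the same first application of Lemma \ref{l29} with $\gamma=3$, the same use of (\ref{188}) and (\ref{189}) for the self-mapping and contraction bounds, and the same order of choosing $M$ and then $R$. The one point where you genuinely diverge is the higher-derivative bootstrap. The paper never returns to the kernel of (\ref{180}): it differentiates the equation for $\overset{\circ}{\mathsf{v}}$ and uses the commutator identity (\ref{216}) to see that $\partial_{s}^{j}\overset{\circ}{\mathsf{v}}$ solves a Cauchy problem of exactly the form (\ref{171})--(\ref{172}) whose source decays one power faster at each stage (the commutator contribution $\tfrac{j}{2}A'\,\partial_{s}^{j}\overset{\circ}{\mathsf{v}}$, with $A'=\tfrac{1+\sigma^{2}}{\partial_{n}f\left(\overrightarrow{1},\,0\right)}$, being controlled by the previous stage), and then simply re-applies Lemma \ref{l29} with $\gamma=3,4,\dots,\mathsf{k}+1$; see (\ref{217}) and (\ref{225}). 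You instead iterate integration by parts inside the representation formula. That route also works, but be aware that your displayed identity alone does not close the induction: from $\partial_{s}^{2}w=-\int_{s}^{\infty}e^{-\frac{A'}{4}(\xi^{2}-s^{2})}\boldsymbol{\eta}'(\xi)\,d\xi$ one gets $\partial_{s}^{3}w=\boldsymbol{\eta}'(s)+\tfrac{A's}{2}\partial_{s}^{2}w=O(s^{-\gamma-1})$, not the needed $O(s^{-\gamma-3})$; a further integration by parts (inserting $\xi\cdot\xi^{-1}$ to recreate the $\xi$-weight) is required at every order to exhibit the cancellation, so the bookkeeping is heavier than the paper's, which reuses Lemma \ref{l29} verbatim. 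Your diagnosis of the one-power loss at order $\mathsf{k}$ is consistent with the paper's: it reflects both the non-gaining estimate (\ref{174}) (caused by the $s\,\partial_{s}$ term) and the fact that $\mathcal{Q}v$ is only $C^{\mathsf{k}-2}$ with $\partial_{s}^{\mathsf{k}-2}\mathcal{Q}v=O(s^{-\mathsf{k}-2})$ by (\ref{188}).
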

\begin{rem}\label{r32}
We find the asymptotic formula: 
$$r\left(s\right)\approx\sigma s+\frac{f\left(\overrightarrow{1},\,0\right)}{\sigma s}$$
by solving the following. Let
$r_{0}\left(s\right)=\sigma s$ and define $r_{1}\left(s\right)$
to be the solution to
\[
f\left(\frac{1}{r_{0}}\overrightarrow{1},\;\frac{-\partial_{s}^{2}r_{0}}{1+\left(\partial_{\text{s}}r_{0}\right){}^{2}}\right)+\;\frac{1}{2}\left(s\,\partial_{s}r_{1}-r_{1}\right)=0
\]
\[
\varrho\, r_{1}\left(\frac{s}{\varrho}\right)-\sigma s\overset{C_{\textrm{loc}}^{\mathsf{k}}}{\longrightarrow}0\quad\textrm{as}\;\varrho\searrow0
\]
Then $$r_{1}\left(s\right)=\sigma s+\frac{f\left(\overrightarrow{1},\,0\right)}{\sigma s}$$
It would be proved rigorously below that this formula really gives the asymptotic to our solution as $s\nearrow\infty$.
\end{rem}
\begin{proof}
Let $M>0$ and $R\geq1$ be constants to be chosen, and take $R$
sufficiently large (depending on $\mathsf{k},\,\mathcal{C},\,\mathcal{K},\, M$)
so that Lemma  \ref{l30} holds. Let $$\mathcal{\boldsymbol{\mathcal{B}}}=\left\{ v\in\Im\Big|\,\boldsymbol{\parallel}v\boldsymbol{\parallel}_{\Im}\leq M\right\} $$
be the closed ball of radius $M$ in the Banach space $\left(\Im,\,\boldsymbol{\parallel}\cdot\boldsymbol{\parallel_{\Im}}\right)$
defined in (\ref{185}) and (\ref{186}). 
By Lemma \ref{l29}, we can define a nonlinear map $\mathcal{F}$ so that for each $v\in\mathcal{\boldsymbol{\mathcal{B}}}$,
$\mathcal{F}v$ is the unique solution to the following Cauchy problem:
\begin{equation}
\mathcal{L}\left(\mathcal{F}v\right)=\frac{1+\sigma^{2}}{\partial_{n}f\left(\overrightarrow{1},\,0\right)}\left(\frac{f\left(\overrightarrow{1},\,0\right)}{\sigma s}-\,\sum_{i=1}^{n-1}\frac{\partial_{i}f\left(\overrightarrow{1},\,0\right)}{\sigma^{2}s^{2}}v+\,\mathcal{Q}v\right)\quad\textrm{on}\,\;[R,\,\infty)\label{203}
\end{equation}
\begin{equation*}
\frac{\mathcal{F}v(s)}{s}\rightarrow0,\;\left(s\,\partial_{s}\mathcal{F}v-\mathcal{F}v\right)\rightarrow0\quad\textrm{as}\; s\nearrow\infty\label{204}
\end{equation*}
Since $\mathcal{Q}v\in C_{0}^{\mathsf{k}-2}[R,\,\infty)$, $\mathcal{F}$
maps $\mathcal{\boldsymbol{\mathcal{B}}}$ into $C_{0}^{\mathsf{k}}[R,\,\infty)$ by Lemma \ref{l29}.
In fact, we would show that $\mathcal{F}v\in\mathcal{\boldsymbol{\mathcal{B}}}$
and $\mathcal{F}$ is a contraction on $\mathcal{\boldsymbol{\mathcal{B}}}$
if we choose $M$ and $R$ appropriately.\\

To start with, let
\begin{equation}
\overset{\circ}{\mathsf{v}}(s)=\mathcal{F}v(s)-\frac{f\left(\overrightarrow{1},\,0\right)}{\sigma s}\label{205}
\end{equation}
Then we have $\overset{\circ}{\mathsf{v}}\in C_{0}^{\mathsf{k}}[R,\,\infty)$ since both
$\mathcal{F}v$ and $s^{-1}$ belong to $C_{0}^{\mathsf{k}}[R,\,\infty)$. Also, by plugging
(\ref{205}) into (\ref{203}), we get 
\begin{equation}
\mathcal{L}\overset{\circ}{\mathsf{v}}=-2\frac{f\left(\overrightarrow{1},\,0\right)}{\sigma s^{3}}+\,\frac{1+\sigma^{2}}{\partial_{n}f\left(\overrightarrow{1},\,0\right)}\left(-\sum_{i=1}^{n-1}\frac{\partial_{i}f\left(\overrightarrow{1},\,0\right)}{\sigma^{2}}\frac{v}{s^{2}}+\,\mathcal{Q}v\right)\label{206}
\end{equation}
\begin{equation*}
\frac{\overset{\circ}{\mathsf{v}}}{s}\rightarrow0,\;\left(s\,\partial_{s}\overset{\circ}{\mathsf{v}}-\overset{\circ}{\mathsf{v}}\right)\rightarrow0,\quad\textrm{as}\; s\nearrow\infty\label{207}
\end{equation*}
Note that the RHS of (\ref{206}) has a stronger decay than that of (\ref{203}), which is an indication that it's better to study the equation of $\overset{\circ}{\mathsf{v}}$ instead of the equation of $\mathcal{F}v$.\\
Then apply Lemma  \ref{l29} (with $\gamma=3$) and Lemma  \ref{l30} to (\ref{206}) to get
\begin{equation}
\max\left\{ \parallel s^{3}\overset{\circ}{\mathsf{v}}\parallel_{L^{\infty}[R,\,\infty)},\,\parallel s^{4}\partial_{s}\overset{\circ}{\mathsf{v}}\parallel_{L^{\infty}[R,\,\infty)}\right\} \label{208}
\end{equation}
\[
\leq\,8\frac{\partial_{n}f\left(\overrightarrow{1},\,0\right)}{1+\sigma^{2}}\frac{f\left(\overrightarrow{1},\,0\right)}{\sigma}\,+\,4\sum_{i=1}^{n-1}\frac{\partial_{i}f\left(\overrightarrow{1},\,0\right)}{\sigma^{2}}\parallel s\, v\parallel_{L^{\infty}[R,\,\infty)}\,+\,4\frac{\parallel s^{5}\mathcal{Q}v\parallel_{L^{\infty}[R,\,\infty)}}{R^{2}}
\]
\[
\leq C\left(n,\,\mathsf{k},\mathcal{\, C},\,\parallel f\parallel_{C^{\mathsf{k}}\left(\mathcal{K}\right)},\, M\right)
\]
\begin{equation}
\parallel s^{3}\partial_{s}^{2}\overset{\circ}{\mathsf{v}}\parallel_{L^{\infty}[R,\,\infty)}\,\leq C\left(n,\,\mathsf{k},\mathcal{\, C},\,\parallel f\parallel_{C^{\mathsf{k}}\left(\mathcal{K}\right)},\, M\right)\label{209}
\end{equation}
Similarly, given a function $\tilde{v}\in\boldsymbol{\mathcal{B}}$, let
$\overset{\circ}{\mathsf{\tilde{v}}}=\mathcal{F}\tilde{v}-\frac{f\left(\overrightarrow{1},\,0\right)}{\sigma s}$ as in (\ref{205}).
Then we still have 
\begin{equation}
\mathcal{L}\overset{\circ}{\mathsf{\tilde{v}}}=-2\frac{f\left(\overrightarrow{1},\,0\right)}{\sigma s^{3}}+\,\frac{1+\sigma^{2}}{\partial_{n}f\left(\overrightarrow{1},\,0\right)}\left(-\sum_{i=1}^{n-1}\frac{\partial_{i}f\left(\overrightarrow{1},\,0\right)}{\sigma^{2}}\frac{\tilde{v}}{s^{2}}+\,\mathcal{Q}\tilde{v}\right)\label{210}
\end{equation}
\begin{equation*}
\frac{\overset{\circ}{\mathsf{\tilde{v}}}}{s}\rightarrow0,\;\left(s\,\partial_{s}\overset{\circ}{\mathsf{\tilde{v}}}-\overset{\circ}{\mathsf{\tilde{v}}}\right)\rightarrow0\quad\textrm{as}\; s\nearrow\infty\label{211}
\end{equation*}
Subtracting (\ref{210}) from (\ref{206}) to get
\begin{equation}
\mathcal{L}\left(\overset{\circ}{\mathsf{v}}-\overset{\circ}{\mathsf{\tilde{v}}}\right)=\frac{1+\sigma^{2}}{\partial_{n}f\left(\overrightarrow{1},\,0\right)}\left(-\sum_{i=1}^{n-1}\frac{\partial_{i}f\left(\overrightarrow{1},\,0\right)}{\sigma^{2}}\frac{v-\tilde{v}}{s^{2}}+\,\left(\mathcal{Q}v-\mathcal{Q}\tilde{v}\right)\right)\label{212}
\end{equation}
\begin{equation*}
\frac{\overset{\circ}{\mathsf{v}}-\overset{\circ}{\mathsf{\tilde{v}}}}{s}\rightarrow0,\;\left\{ \, s\,\partial_{s}\left(\mathsf{v-\overset{\circ}{\mathsf{\tilde{v}}}}\right)-\left(\mathsf{v-\overset{\circ}{\mathsf{\tilde{v}}}}\right)\,\right\} \rightarrow0\quad\textrm{as}\; s\nearrow\infty\label{213}
\end{equation*}
which, by Lemma  \ref{l29} (with $\gamma=3$) and Lemma  \ref{l30}, yields that
\begin{equation}
\max\left\{ \parallel s^{3}\left(\overset{\circ}{\mathsf{v}}-\overset{\circ}{\mathsf{\tilde{v}}}\right)\parallel_{L^{\infty}[R,\,\infty)},\;\parallel s^{4}\left(\partial_{s}\overset{\circ}{\mathsf{v}}-\partial_{s}\overset{\circ}{\mathsf{\tilde{v}}}\right)\parallel_{L^{\infty}[R,\,\infty)}\right\} \label{214}
\end{equation}
\[
\leq\,4\sum_{i=1}^{n-1}\frac{\partial_{i}f\left(\overrightarrow{1},\,0\right)}{\sigma^{2}}\parallel s\left(v-\tilde{v}\right)\parallel_{L^{\infty}[R,\,\infty)}+\,4\frac{\parallel s^{5}\left(\mathcal{Q}v-\mathcal{Q}\tilde{v}\right)\parallel_{L^{\infty}[R,\,\infty)}}{R^{2}}
\]
\[
\leq\, C\left(n,\,\mathsf{k},\mathcal{\, C},\,\parallel f\parallel_{C^{\mathsf{k}+1}\left(\mathcal{K}\right)},\, M\right)\boldsymbol{\parallel}v-\tilde{v}\boldsymbol{\parallel}_{\Im}
\]
\begin{equation}
\parallel s^{3}\left(\partial_{s}^{2}\overset{\circ}{\mathsf{v}}-\partial_{s}^{2}\overset{\circ}{\mathsf{\tilde{v}}}\right)\parallel_{L^{\infty}[R,\,\infty)}\,\leq\, C\left(n,\,\mathsf{k},\mathcal{\, C},\,\parallel f\parallel_{C^{\mathsf{k}+1}\left(\mathcal{K}\right)},\, M\right)\boldsymbol{\parallel}v-\tilde{v}\boldsymbol{\parallel}_{\Im}\label{215}
\end{equation}\\\\

Next, differentiate (\ref{206}), (\ref{212}) and use the commutator:
\begin{equation}
\partial_{s}\,\mathcal{L}-\mathcal{L}\,\partial_{s}=-\frac{1}{2}\frac{1+\sigma^{2}}{\partial_{n}f\left(\overrightarrow{1},\,0\right)}\partial_{s}\label{216}
\end{equation}
and also (\ref{208}), (\ref{209}), (\ref{214}), (\ref{215}) to get

\begin{equation}
\mathcal{L}\left(\partial_{s}\overset{\circ}{\mathsf{v}}\right)=\,6\frac{f\left(\overrightarrow{1},\,0\right)}{\sigma s^{4}}+\,\frac{1+\sigma^{2}}{\partial_{n}f\left(\overrightarrow{1},\,0\right)}\left(\frac{1}{2}\partial_{s}\overset{\circ}{\mathsf{v}}-\,\sum_{i=1}^{n-1}\frac{\partial_{i}f\left(\overrightarrow{1},\,0\right)}{\sigma^{2}}\,\partial_{s}\left(\frac{v}{s^{2}}\right)+\,\partial_{s}\mathcal{Q}v\right)\label{217}
\end{equation}
\begin{equation*}
\frac{\partial_{s}\overset{\circ}{\mathsf{v}}}{s}\rightarrow0,\;\left(s\partial_{s}^{2}\overset{\circ}{\mathsf{v}}-\partial_{s}\overset{\circ}{\mathsf{v}}\right)\rightarrow0\quad\textrm{as}\; s\nearrow\infty\label{218}
\end{equation*}

\noindent \resizebox{1.0\linewidth}{!}{
  \begin{minipage}{\linewidth}
  \begin{align}
\mathcal{L}\left(\partial_{s}\overset{\circ}{\mathsf{v}}-\partial_{s}\overset{\circ}{\mathsf{\tilde{v}}}\right)=\frac{1+\sigma^{2}}{\partial_{n}f\left(\overrightarrow{1},\,0\right)}\left(\frac{1}{2}\left(\partial_{s}\overset{\circ}{\mathsf{v}}-\partial_{s}\overset{\circ}{\mathsf{\tilde{v}}}\right)-\,\sum_{i=1}^{n-1}\frac{\partial_{i}f\left(\overrightarrow{1},\,0\right)}{\sigma^{2}}\,\partial_{s}\left(\frac{v-\tilde{v}}{s^{2}}\right)+\,\partial_{s}\left(\mathcal{Q}v-\mathcal{Q}\tilde{v}\right)\right)\label{219}
\end{align}
 \end{minipage}
}

\begin{equation*}
\frac{\partial_{s}\overset{\circ}{\mathsf{v}}-\partial_{s}\overset{\circ}{\mathsf{\tilde{v}}}}{s}\rightarrow0,\;\left\{ \, s\left(\partial_{s}^{2}\overset{\circ}{\mathsf{v}}-\partial_{s}^{2}\overset{\circ}{\mathsf{\tilde{v}}}\right)-\left(\partial_{s}\overset{\circ}{\mathsf{v}}-\partial_{s}\overset{\circ}{\mathsf{\tilde{v}}}\right)\,\right\} \rightarrow0\quad\textrm{as}\; s\nearrow\infty\label{220}
\end{equation*}
Now apply the gradient estimates in Lemma  \ref{l29} (with $\gamma=4$), Lemma  \ref{l30}  
(and also (\ref{208}), (\ref{214})) to (\ref{217}), (\ref{219}) to get

\noindent \resizebox{1.0\linewidth}{!}{
  \begin{minipage}{\linewidth}
  \begin{align}
\parallel s^{5}\partial_{s}^{2}\overset{\circ}{\mathsf{v}}\parallel_{L^{\infty}[R,\,\infty)}\leq\,\left\{\,24\frac{\partial_{n}f\left(\overrightarrow{1},\,0\right)}{1+\sigma^{2}}\frac{f\left(\overrightarrow{1},\,0\right)}{\sigma}\,+\,2\parallel s^{4}\partial_{s}\overset{\circ}{\mathsf{v}}\parallel_{L^{\infty}[R,\,\infty)}+\,4\sum_{i=1}^{n-1}\frac{\partial_{i}f\left(\overrightarrow{1},\,0\right)}{\sigma^{2}}\parallel s^{2}\partial_{s}v\parallel_{L^{\infty}[R,\,\infty)}\right.\label{221}
\end{align}
  \end{minipage}
}

\[
\left.+\,8\,\sum_{i=1}^{n-1}\frac{\partial_{i}f\left(\overrightarrow{1},\,0\right)}{\sigma^{2}}\parallel s\, v\parallel_{L^{\infty}[R,\,\infty)}+\,4\frac{\parallel s^{6}\partial_{s}\mathcal{Q}v\parallel_{L^{\infty}[R,\,\infty)}}{R^{2}}\,\right\}
\]
\[
\leq C\left(n,\,\mathsf{k},\mathcal{\, C},\,\parallel f\parallel_{C^{\mathsf{k}}\left(\mathcal{K}\right)},\, M\right)
\]
\begin{equation}
\parallel s^{4}\partial_{s}^{3}\overset{\circ}{\mathsf{v}}\parallel_{L^{\infty}[R,\,\infty)}\,\leq\, C\left(n,\,\mathsf{k},\mathcal{\, C},\,\parallel f\parallel_{C^{\mathsf{k}}\left(\mathcal{K}\right)},\, M\right)\label{222}
\end{equation}

\noindent \resizebox{1.0\linewidth}{!}{
  \begin{minipage}{\linewidth}
  \begin{align}
\parallel s^{5}\left(\partial_{s}^{2}\overset{\circ}{\mathsf{v}}-\partial_{s}^{2}\overset{\circ}{\mathsf{\tilde{v}}}\right)\parallel_{L^{\infty}[R,\,\infty)}\,\leq\,\left\{\,2\parallel s^{4}\left(\partial_{s}\overset{\circ}{\mathsf{v}}-\partial_{s}\overset{\circ}{\mathsf{\tilde{v}}}\right)\parallel_{L^{\infty}[R,\,\infty)}+\,4\sum_{i=1}^{n-1}\frac{\partial_{i}f\left(\overrightarrow{1},\,0\right)}{\sigma^{2}}\parallel s^{2}\left(\partial_{s}v-\partial_{s}\tilde{v}\right)\parallel_{L^{\infty}[R,\,\infty)}\right.\label{223}
\end{align}
 \end{minipage}
}

\[
\left.+8\,\sum_{i=1}^{n-1}\frac{\partial_{i}f\left(\overrightarrow{1},\,0\right)}{\sigma^{2}}\parallel s\left(v-\tilde{v}\right)\parallel_{L^{\infty}[R,\,\infty)}+\,4\frac{\parallel s^{6}\left(\partial_{s}\mathcal{Q}v-\partial_{s}\mathcal{Q}\tilde{v}\right)\parallel_{L^{\infty}[R,\,\infty)}}{R^{2}}\,\right\}
\]
\[
\leq\, C\left(n,\,\mathsf{k},\mathcal{\, C},\,\parallel f\parallel_{C^{\mathsf{k}+1}\left(\mathcal{K}\right)},\, M\right)\boldsymbol{\parallel}v-\tilde{v}\boldsymbol{\parallel}_{\Im}
\]
\begin{equation}
\parallel s^{4}\left(\partial_{s}^{3}\overset{\circ}{\mathsf{v}}-\partial_{s}^{3}\overset{\circ}{\mathsf{\tilde{v}}}\right)\parallel_{L^{\infty}[R,\,\infty)}\leq\,\, C\left(n,\,\mathsf{k},\mathcal{\, C},\,\parallel f\parallel_{C^{\mathsf{k}+1}\left(\mathcal{K}\right)},\, M\right)\boldsymbol{\parallel}v-\tilde{v}\boldsymbol{\parallel}_{\Im}\label{224}
\end{equation}
Note that by taking derivatives of the equation, (\ref{221}) and (\ref{223}) improve (\ref{209}) and (\ref{215}).\\\\

Continue the process (of differentiating equations and applying Lemma \ref{l29} and Lemma \ref{l30}) until we arrive at 

\noindent \resizebox{1.0\linewidth}{!}{
  \begin{minipage}{\linewidth}
  \begin{align}
\mathcal{L}\left(\partial_{s}^{\mathsf{k}-2}\overset{\circ}{\mathsf{v}}\right)=\,\partial_{s}^{\mathsf{k}-2}\left(-2\frac{f\left(\overrightarrow{1},\,0\right)}{\sigma s^{3}}\right)+\,\frac{1+\sigma^{2}}{\partial_{n}f\left(\overrightarrow{1},\,0\right)}\left(\frac{\mathsf{k}-2}{2}\partial_{s}^{\mathsf{k}-2}\overset{\circ}{\mathsf{v}}-\,\sum_{i=1}^{n-1}\frac{\partial_{i}f\left(\overrightarrow{1},\,0\right)}{\sigma^{2}}\partial_{s}^{\mathsf{k}-2}\left(\frac{v}{s^{2}}\right)+\,\partial_{s}^{\mathsf{k}-2}\mathcal{Q}v\right)\label{225}
\end{align}
 \end{minipage}
}

\begin{equation*}
\frac{\partial_{s}^{\mathsf{k}-2}\overset{\circ}{\mathsf{v}}}{s}\rightarrow0,\;\left(s\partial_{s}^{\mathsf{k}-1}\overset{\circ}{\mathsf{v}}-\partial_{s}^{\mathsf{k}-2}\overset{\circ}{\mathsf{v}}\right)\rightarrow0\quad\textrm{as}\; s\nearrow\infty\label{226}
\end{equation*}

\noindent \resizebox{1.0\linewidth}{!}{
  \begin{minipage}{\linewidth}
  \begin{align}
\mathcal{L}\left(\partial_{s}^{\mathsf{k}-2}\overset{\circ}{\mathsf{v}}-\partial_{s}^{\mathsf{k}-2}\overset{\circ}{\mathsf{\tilde{v}}}\right)=\left\{\,\frac{1+\sigma^{2}}{\partial_{n}f\left(\overrightarrow{1},\,0\right)}\left(\frac{\mathsf{k}-2}{2}\left(\partial_{s}^{\mathsf{k}-2}\overset{\circ}{\mathsf{v}}-\partial_{s}^{\mathsf{k}-2}\overset{\circ}{\mathsf{\tilde{v}}}\right)-\,\sum_{i=1}^{n-1}\frac{\partial_{i}f\left(\overrightarrow{1},\,0\right)}{\sigma^{2}}\partial_{s}^{\mathsf{k}-2}\left(\frac{v-\tilde{v}}{s^{2}}\right)\right)\right.\label{227}
\end{align}
 \end{minipage}
}

\[
\left.+\frac{1+\sigma^{2}}{\partial_{n}f\left(\overrightarrow{1},\,0\right)}\partial_{s}^{\mathsf{k}-2}\left(\mathcal{Q}v-\mathcal{Q}\tilde{v}\right)\right\}
\]
\begin{equation*}
\frac{\partial_{s}^{\mathsf{k}-2}\overset{\circ}{\mathsf{v}}-\partial_{s}^{\mathsf{k}-2}\overset{\circ}{\mathsf{\tilde{v}}}}{s}\rightarrow0,\;\left\{ \, s\left(\partial_{s}^{\mathsf{k}-1}\overset{\circ}{\mathsf{v}}-\partial_{s}^{\mathsf{k}-1}\overset{\circ}{\mathsf{\tilde{v}}}\right)-\left(\partial_{s}^{\mathsf{k}-2}\overset{\circ}{\mathsf{v}}-\partial_{s}^{\mathsf{k}-2}\overset{\circ}{\mathsf{\tilde{v}}}\right)\,\right\} \rightarrow0\quad\textrm{as}\; s\nearrow\infty\label{228}
\end{equation*}\\
By induction, we have for $j=0,\cdots,\mathsf{k}-2$,
\begin{equation}
\parallel s^{j+3}\partial_{s}^{j}\overset{\circ}{\mathsf{v}}\parallel_{L^{\infty}[R,\,\infty)}\leq\, C\left(n,\,\mathsf{k},\mathcal{\, C},\,\parallel f\parallel_{C^{\mathsf{k}}\left(\mathcal{K}\right)},\, M\right)\label{229}
\end{equation}
\begin{equation}
\parallel s^{j+3}\left(\partial_{s}^{j}\overset{\circ}{\mathsf{v}}-\partial_{s}^{j}\overset{\circ}{\mathsf{\tilde{v}}}\right)\parallel_{L^{\infty}[R,\,\infty)}\leq\, C\left(n,\,\mathsf{k},\mathcal{\, C},\,\parallel f\parallel_{C^{\mathsf{k}+1}\left(\mathcal{K}\right)},\, M\right)\boldsymbol{\parallel}v-\tilde{v}\boldsymbol{\parallel}_{\Im}\label{230}
\end{equation}
Again, apply Lemma  \ref{l29} (with $\gamma=\mathsf{k}+1$), Lemma
 \ref{l30} (and also (\ref{229}), (\ref{230})) to (\ref{225}) and (\ref{227}) to get
\begin{equation}
\parallel s^{\mathsf{k}+2}\partial_{s}^{\mathsf{k}-1}\overset{\circ}{\mathsf{v}}\parallel_{L^{\infty}[R,\,\infty)}\,\leq\,\left\{\,8\frac{\partial_{n}f\left(\overrightarrow{1},\,0\right)}{1+\sigma^{2}}\parallel s^{\mathsf{k}+1}\partial_{s}^{\mathsf{k}-2}\left(\frac{f\left(\overrightarrow{1},\,0\right)}{\sigma s^{3}}\right)\parallel_{L^{\infty}[R,\,\infty)}\right.\label{231}
\end{equation}
\[
+\,2\left(\mathsf{k}-2\right)\parallel s^{\mathsf{k}+1}\partial_{s}^{\mathsf{k}-2}\overset{\circ}{\mathsf{v}}\parallel_{L^{\infty}[R,\,\infty)}+\,4\sum_{i=1}^{n-1}\frac{\partial_{i}f\left(\overrightarrow{1},\,0\right)}{\sigma^{2}}\parallel s^{\mathsf{k}+1}\partial_{s}^{\mathsf{k}-2}\left(\frac{v}{s^{2}}\right)\parallel_{L^{\infty}[R,\,\infty)}
\]
\[
\left.+\,4\frac{\parallel s^{\mathsf{k}+2}\partial_{s}^{\mathsf{k}-2}\mathcal{Q}v\parallel_{L^{\infty}[R,\,\infty)}}{R}\,\right\}
\]
\[
\leq C\left(n,\,\mathsf{k},\mathcal{\, C},\,\parallel f\parallel_{C^{\mathsf{k}}\left(\mathcal{K}\right)},\, M\right)
\]

\begin{equation}
\parallel s^{\mathsf{k}+1}\partial_{s}^{\mathsf{k}}\overset{\circ}{\mathsf{v}}\parallel_{L^{\infty}[R,\,\infty)}\,\leq\, C\left(n,\,\mathsf{k},\mathcal{\, C},\,\parallel f\parallel_{C^{\mathsf{k}}\left(\mathcal{K}\right)},\, M\right)\label{232}
\end{equation}

\begin{equation}
\parallel s^{\mathsf{k}+2}\left(\partial_{s}^{\mathsf{k}-1}\overset{\circ}{\mathsf{v}}-\partial_{s}^{\mathsf{k}-1}\overset{\circ}{\mathsf{\tilde{v}}}\right)\parallel_{L^{\infty}[R,\,\infty)}\,\leq\,\Big\{\,2\left(\mathsf{k}-2\right)\parallel s^{\mathsf{k}+1}\left(\partial_{s}^{\mathsf{k}-2}\overset{\circ}{\mathsf{v}}-\partial_{s}^{\mathsf{k}-2}\overset{\circ}{\mathsf{\tilde{v}}}\right)\parallel_{L^{\infty}[R,\,\infty)}\label{233}
\end{equation}

\noindent \resizebox{1.0\linewidth}{!}{
  \begin{minipage}{\linewidth}
  \begin{align*}
+\,4\sum_{i=1}^{n-1}\frac{\partial_{i}f\left(\overrightarrow{1},\,0\right)}{\sigma^{2}}\parallel s^{\mathsf{k}+1}\partial_{s}^{\mathsf{k}-2}\left(\frac{v-\tilde{v}}{s^{2}}\right)\parallel_{L^{\infty}[R,\,\infty)}+\,4\frac{\parallel s^{\mathsf{k}+2}\left(\partial_{s}^{\mathsf{k}-2}\mathcal{Q}v-\partial_{s}^{\mathsf{k}-2}\mathcal{Q}\tilde{v}\right)\parallel_{L^{\infty}[R,\,\infty)}}{R}\,\Big\}
\end{align*}
 \end{minipage}
}

\[
\leq\, C\left(n,\,\mathsf{k},\mathcal{\, C},\,\parallel f\parallel_{C^{\mathsf{k}+1}\left(\mathcal{K}\right)},\, M\right)\boldsymbol{\parallel}v-\tilde{v}\boldsymbol{\parallel}_{\Im}
\]
\begin{equation}
\parallel s^{\mathsf{k}+1}\left(\partial_{s}^{\mathsf{k}}\overset{\circ}{\mathsf{v}}-\partial_{s}^{\mathsf{k}}\overset{\circ}{\mathsf{\tilde{v}}}\right)\parallel_{L^{\infty}[R,\,\infty)}\,\leq C\left(n,\,\mathsf{k},\mathcal{\, C},\,\parallel f\parallel_{C^{\mathsf{k}+1}\left(\mathcal{K}\right)},\, M\right)\boldsymbol{\parallel}v-\tilde{v}\boldsymbol{\parallel}_{\Im}\label{234}
\end{equation}
\\\\

From (\ref{229}), (\ref{231}) and
(\ref{232}), we get
\[
\mathcal{F}v\left(s\right)=\frac{f\left(\overrightarrow{1},\,0\right)}{\sigma s}+\overset{\circ}{\mathsf{v}}\left(s\right)\,\in\Im
\]
and
\begin{equation}
\boldsymbol{\parallel}\mathcal{F}v\boldsymbol{\parallel}_{\Im}\:\leq\:\boldsymbol{\parallel}\frac{f\left(\overrightarrow{1},\,0\right)}{\sigma s}\boldsymbol{\parallel}_{\Im}\,+\,\boldsymbol{\parallel}\overset{\circ}{\mathsf{v}}\boldsymbol{\parallel}_{\Im}\label{235}
\end{equation}

\noindent \resizebox{1.0\linewidth}{!}{
  \begin{minipage}{\linewidth}
  \begin{align*}
\leq\,\Big|\frac{f\left(\overrightarrow{1},\,0\right)}{\sigma}\Big|\boldsymbol{\parallel}s^{-1}\boldsymbol{\parallel}_{\Im}+\,\max\left\{ \,\frac{1}{R^{2}}\sum_{j=0}^{\mathsf{k}-1}\parallel s^{j+3}\partial_{s}^{j}\overset{\circ}{\mathsf{v}}\parallel_{L^{\infty}[R,\,\infty)},\;\frac{1}{R}\parallel s^{\mathsf{k}+1}\partial_{s}^{\mathsf{k}}\overset{\circ}{\mathsf{v}}\parallel_{L^{\infty}[R,\,\infty)}\,\right\} 
\end{align*}
 \end{minipage}
}

\[
\leq\,\Big|\frac{f\left(\overrightarrow{1},\,0\right)}{\sigma}\Big|\boldsymbol{\parallel}s^{-1}\boldsymbol{\parallel}_{\Im}\:+\:\frac{C\left(n,\,\mathsf{k},\mathcal{\, C},\,\parallel f\parallel_{C^{\mathsf{k}}\left(\mathcal{K}\right)},\, M\right)}{R}
\]
\\
Besides, from (\ref{230}), (\ref{233})
and (\ref{234}), we have 
\begin{equation}
\boldsymbol{\parallel}\mathcal{F}v-\mathcal{F}\tilde{v}\boldsymbol{\parallel}_{\Im}\:=\:\boldsymbol{\parallel}\overset{\circ}{\mathsf{v}}-\overset{\circ}{\mathsf{\tilde{v}}}\boldsymbol{\parallel}_{\Im}\,\label{236}
\end{equation}
\[
\leq\,\max\left\{ \,\frac{1}{R^{2}}\sum_{j=0}^{\mathsf{k}-1}\parallel s^{j+3}\left(\partial_{s}^{j}\overset{\circ}{\mathsf{v}}-\partial_{s}^{j}\overset{\circ}{\mathsf{\tilde{v}}}\right)\parallel_{L^{\infty}[R,\,\infty)},\;\frac{1}{R}\parallel s^{\mathsf{k}+1}\left(\partial_{s}^{\mathsf{k}}\overset{\circ}{\mathsf{v}}-\partial_{s}^{\mathsf{k}}\overset{\circ}{\mathsf{\tilde{v}}}\right)\parallel_{L^{\infty}[R,\,\infty)}\,\right\} 
\]
\[
\leq\frac{C\left(n,\,\mathsf{k},\mathcal{\, C},\,\parallel f\parallel_{C^{\mathsf{k}+1}\left(\mathcal{K}\right)},\, M\right)}{R}\boldsymbol{\parallel}v-\tilde{v}\boldsymbol{\parallel}_{\Im}
\]
Now we choose 
\[
M=\,\Big|\frac{f\left(\overrightarrow{1},\,0\right)}{\sigma}\Big|\boldsymbol{\parallel}s^{-1}\boldsymbol{\parallel}_{\Im}\,+\,\frac{1}{2}
\]
and take $R$ sufficiently large so that 
\[
\frac{C\left(n,\,\mathsf{k},\mathcal{\, C},\,\parallel f\parallel_{C^{\mathsf{k}+1}\left(\mathcal{K}\right)},\, M\right)}{R}\leq\frac{1}{2}
\]
Then we have $\mathcal{F}:\,\mathcal{\boldsymbol{\mathcal{B}}\rightarrow\mathcal{\boldsymbol{\mathcal{B}}}}$
is a contraction. 

By Banach fixed point theorem, there is a unique fixed point
$u$ of $\mathcal{F}$ in $\mathcal{\boldsymbol{\mathcal{B}}}$. Moreover, let
\[
\overset{\circ}{\mathsf{u}}(s)=\,\mathcal{F}u(s)\,-\,\frac{f\left(\overrightarrow{1},\,0\right)}{\sigma s}\,=u(s)\,-\,\frac{f\left(\overrightarrow{1},\,0\right)}{\sigma s}
\]
then by (\ref{229}), (\ref{231}) and
(\ref{232}), $\overset{\circ}{\mathsf{u}}\in C_{0}^{\mathsf{k}}[R,\,\infty)$ satisfies
\[
\Big\{\,\parallel s^{3}\,\overset{\circ}{\mathsf{u}}\parallel_{L^{\infty}[R,\,\infty)}+\parallel s^{4}\partial_{s}\overset{\circ}{\mathsf{u}}\parallel_{L^{\infty}[R,\,\infty)}+\cdots+\parallel s^{\mathsf{k}+2}\partial_{s}^{\mathsf{k}-1}\overset{\circ}{\mathsf{u}}\parallel_{L^{\infty}[R,\,\infty)}
\]
\[
+\parallel s^{\mathsf{k}+1}\partial_{s}^{\mathsf{k}}\overset{\circ}{\mathsf{u}}\parallel_{L^{\infty}[R,\,\infty)}\,\Big\}
\]
\[
\leq\, C\left(n,\,\mathsf{k},\mathcal{\, C},\,\parallel f\parallel_{C^{\mathsf{k}}\left(\mathcal{K}\right)}\right)
\]
\end{proof}
The following theorem is a direct result of Theorem \ref{t31}.
\begin{thm}\label{t33}
There exist $R=R\left(n,\,\mathsf{k},\mathcal{\, C},\,\mathcal{K},\,\parallel f\parallel_{C^{\mathsf{k}+1}\left(\mathcal{\mathit{\mathcal{K}}}\right)}\right)\geq1$
and $\overset{\circ}{\mathsf{u}}\in C_{0}^{\mathsf{k}}[R,\,\infty)$ such that 
\[
\Sigma\equiv\left\{\left. \left(\left(\sigma s\,+\,\frac{f\left(\overrightarrow{1},\,0\right)}{\sigma s}\,+\,\overset{\circ}{\mathsf{u}}\left(s\right)\right)\nu,\; s\right)\right|\,\nu\in\mathcal{\mathbf{S}}^{n-1},\, s\in[R,\,\infty)\right\} 
\]
is a rotationally symmetric $f$ self-shrinker which is $C^{\mathsf{k}}$asymptotic
to $\mathcal{C}$ at infinity. 
In addition, the corresponding self-similar
solution to the $f$ curvature flow is given by 
\[
\Sigma_{t}=\sqrt{-t}\,\Sigma=\left\{ \left. \left(\left(\sigma s\,-t\,\frac{f\left(\overrightarrow{1},\,0\right)}{\sigma s}\,+\,\overset{\circ}{\mathsf{u}}_{t}\left(s\right)\right)\nu,\; s\right)\right|\,\nu\in\mathcal{\mathbf{S}}^{n-1},\, s\in[\sqrt{-t}R,\,\infty)\right\} 
\]
for $t\in[-1,\,0)$, where $\overset{\circ}{\mathsf{u}}_{t}\left(s\right)\equiv\sqrt{-t}\,\overset{\circ}{\mathsf{u}}\left(\frac{s}{\sqrt{-t}}\right)$
satisfies 
\[
\parallel s^{3}\,\overset{\circ}{\mathsf{u}}_{t}\parallel_{L^{\infty}[\sqrt{-t}R,\,\infty)}+\parallel s^{4}\partial_{s}\overset{\circ}{\mathsf{u}}_{t}\parallel_{L^{\infty}[\sqrt{-t}R,\,\infty)}+\cdots+\parallel s^{k+2}\partial_{s}^{k-1}\overset{\circ}{\mathsf{u}}_{t}\parallel_{L^{\infty}[\sqrt{-t}R,\,\infty)}
\]
\[
\leq\, C\left(n,\,\mathsf{k},\mathcal{\, C},\,\parallel f\parallel_{C^{\mathsf{k}}\left(\mathcal{K}\right)}\right)\,\left(-t\right)^{2}
\]
\[
\parallel s^{k+1}\partial_{s}^{k}\overset{\circ}{\mathsf{u}}_{t}\parallel_{L^{\infty}[\sqrt{-t}R,\,\infty)}\,\leq\, C\left(n,\,\mathsf{k},\mathcal{\, C},\,\parallel f\parallel_{C^{\mathsf{k}}\left(\mathcal{K}\right)}\right)\,\left(-t\right)
\]
for all $t\in[-1,\,0)$.
\end{thm}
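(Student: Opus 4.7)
The plan is to deduce this statement as a direct corollary of Theorem \ref{t31} combined with an elementary scaling argument. First I would invoke Theorem \ref{t31} to obtain $R\geq 1$ and $\overset{\circ}{\mathsf{u}}\in C_{0}^{\mathsf{k}}[R,\infty)$ such that the function $r(s)=\sigma s+\frac{f(\overrightarrow{1},0)}{\sigma s}+\overset{\circ}{\mathsf{u}}(s)$ satisfies the rotationally symmetric $f$ self-shrinker equation (\ref{157}) on $[R,\infty)$ together with $\varrho\,r(s/\varrho)-\sigma s\to 0$ in $C_{\mathrm{loc}}^{\mathsf{k}}$ as $\varrho\searrow 0$. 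By (\ref{157}) and (\ref{160}), this is precisely the statement that the hypersurface $\Sigma$ defined in the theorem is a rotationally symmetric $f$ self-shrinker which is $C^{\mathsf{k}}$ asymptotic to $\mathcal{C}$, so the first assertion is immediate.

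For the second assertion I would use that, because $f$ is homogeneous of degree one, the rescaled family $\{\Sigma_{t}=\sqrt{-t}\,\Sigma\}_{t<0}$ is automatically a self-similar solution to the $f$ curvature flow, as recorded in the introduction. Parametrizing $\Sigma_{t}$ by its height coordinate and denoting its radius function by $r_{t}$, we have $r_{t}(s)=\sqrt{-t}\,r(s/\sqrt{-t})$. Substituting the explicit form of $r$ gives $r_{t}(s)=\sigma s-t\,\frac{f(\overrightarrow{1},0)}{\sigma s}+\overset{\circ}{\mathsf{u}}_{t}(s)$ with $\overset{\circ}{\mathsf{u}}_{t}(s)=\sqrt{-t}\,\overset{\circ}{\mathsf{u}}(s/\sqrt{-t})$, matching the claimed formula, while the domain $s\in[R,\infty)$ for $\Sigma$ is mapped to $s\in[\sqrt{-t}R,\infty)$ for $\Sigma_{t}$.

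The weighted $L^{\infty}$ estimates for $\overset{\circ}{\mathsf{u}}_{t}$ are then obtained by a direct change of variables $y=s/\sqrt{-t}$. The chain rule gives $\partial_{s}^{j}\overset{\circ}{\mathsf{u}}_{t}(s)=(-t)^{(1-j)/2}\,\partial_{y}^{j}\overset{\circ}{\mathsf{u}}(y)$, hence
\[
\parallel s^{m}\partial_{s}^{j}\overset{\circ}{\mathsf{u}}_{t}\parallel_{L^{\infty}[\sqrt{-t}R,\infty)}\,=\,(-t)^{(m+1-j)/2}\,\parallel y^{m}\partial_{y}^{j}\overset{\circ}{\mathsf{u}}\parallel_{L^{\infty}[R,\infty)}.
\]
The weight pairs $(m,j)=(j+3,j)$ for $j=0,1,\ldots,\mathsf{k}-1$ all yield the exponent $(-t)^{2}$, while the pair $(m,j)=(\mathsf{k}+1,\mathsf{k})$ yields $(-t)^{1}$; feeding in the bounds on $\overset{\circ}{\mathsf{u}}$ supplied by Theorem \ref{t31} produces the two displayed inequalities. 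Since the whole argument reduces to tracking scaling exponents once Theorem \ref{t31} is in hand, I do not anticipate any genuine analytic obstacle in this proof; the only mildly delicate point is verifying that the two different scaling weights in the norm $\boldsymbol{\parallel}\cdot\boldsymbol{\parallel}_{\Im}$ (the weight $s^{\mathsf{k}+1}$ on $\partial_{s}^{\mathsf{k}}$ versus $s^{j+3}$ on the lower derivatives) are correctly accounted for, which is exactly what the two separate displayed estimates reflect.
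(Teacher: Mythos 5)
Your proposal is correct and matches the paper's intent exactly: the paper states Theorem \ref{t33} as "a direct result of Theorem \ref{t31}" without writing out the details, and your argument — invoking Theorem \ref{t31} for the self-shrinker and its asymptotics, using the degree-one homogeneity of $f$ to get the self-similar family $\Sigma_{t}=\sqrt{-t}\,\Sigma$, and computing $\parallel s^{m}\partial_{s}^{j}\overset{\circ}{\mathsf{u}}_{t}\parallel_{L^{\infty}[\sqrt{-t}R,\infty)}=(-t)^{(m+1-j)/2}\parallel y^{m}\partial_{y}^{j}\overset{\circ}{\mathsf{u}}\parallel_{L^{\infty}[R,\infty)}$ — is precisely the omitted verification. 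The exponent bookkeeping for the pairs $(j+3,j)$, $j=0,\dots,\mathsf{k}-1$, versus $(\mathsf{k}+1,\mathsf{k})$ is right and correctly explains the $(-t)^{2}$ versus $(-t)$ rates.
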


\vspace{0.5in}

\vspace{0.3in}
\email{
\noindent Department of Mathematics, Rutgers University - Hill Center for the Mathematical Sciences 
110 Frelinghuysen Rd., Piscataway, NJ 08854-8019\\\\
E-mail address: \textsf{showhow@math.rutgers.edu}
}

\begin{thebibliography}{EFV}
\bibitem[A]{A}B. Andrews, Contraction of convex hypersurfaces in
Euclidean space, Calc, Var. Partial Differential Equations 2 (1994),
no. 2, 151-171.

\bibitem[D]{D}S. Dieter, Nonlinear degenerate curvature flows for
weakly convex hypersurfaces, Calc. Var. Partial Differential Equations
22 (2005), no.2, 229-251.

\bibitem[DN]{DN}P. Daskalopoulos, N. Sesum, The harmonic mean curvature
flow of nonconvex surfaces in $\mathbb{R}^{3}$, Calc. Var. Partial
Differential Equations 37 (2010), no. 1-2, 187-215.


\bibitem[E]{E}K. Ecker, Regularity theory for mean curvature flow,
Progress in Nonlinear Differential Equations and their Applicaitions,
57. $\textrm{Birkh\ensuremath{\ddot{a}}user}$ Boston, Inc., Boston,
MA, 2004.

\bibitem[G]{G}S.-H. Guo, Uniqueness of self-shrinkers to the degree-one curvature flow with
a tangent cone at infinity, arXiv:1604.08577.

\bibitem[KM]{KM}S. J. Kleene, N. M. Moller, Self-shrinkers with a
rotational symmetry, Trans. Amer. Math. Soc. 366 (2014), no. 8, 3943-3963.

\bibitem[W]{W}L. Wang, Uniqueness of self-similar shrinkers with
asymptotically conical ends, J. Amer. Math. Soc. 27 (2014), no. 3,
613-638.



\end{thebibliography}
\end{document}